\newcommand{\norm}[1]{\left\lVert#1\right\rVert}
\numberwithin{equation}{section}
\theoremstyle{definition}
\newtheorem{theorem}{Theorem}
\newtheorem{lemma}{Lemma}
\newtheorem{corollary}{Corollary}
\newtheorem{remark}{Remark}
\begin{document}
	
	\title{Interpolatory HDG Method for Parabolic Semilinear PDEs}%\tnoteref{mytitlenote}}

\author{Bernardo Cockburn%
	\thanks{School of Mathematics, University of Minnesota, Minneapolis, MN (\mbox{cockburn@math.umn.edu}).}
	\and
	John~R.~Singler%
	\thanks{Department of Mathematics and Statistics, Missouri University of Science and Technology, Rolla, MO (\mbox{singlerj@mst.edu}, \mbox{ywzfg4@mst.edu}).}
	\and
	Yangwen Zhang%
	\footnotemark[2]
}

\date{February 10, 2018}

\maketitle

\begin{abstract}
	We propose the interpolatory hybridizable discontinuous Galerkin (Interpolatory HDG) method for a class of scalar parabolic semilinear PDEs.  The Interpolatory HDG method uses an interpolation procedure to efficiently and accurately approximate the nonlinear term.  This procedure avoids the numerical quadrature typically required for the assembly of the global matrix at each iteration in each time step, which is a computationally costly component of the standard HDG method for nonlinear PDEs.  Furthermore, the Interpolatory HDG interpolation procedure yields simple explicit expressions for the nonlinear term and Jacobian matrix, which leads to a simple unified implementation for a variety of nonlinear PDEs.  For a globally-Lipschitz nonlinearity, we prove that the Interpolatory HDG method does not result in a reduction of the order of convergence.  We display 2D and 3D numerical experiments to demonstrate the performance of the method.
\end{abstract}

%%%%%%%%%%%%%%%%%%%%%%%
%%------------------------------------------------------------------------------------------------
\section{Introduction}
%%------------------------------------------------------------------------------------------------

\label{introduction}
We consider the following class of scalar parabolic semilinear PDEs on a Lipschitz polyhedral domain $\Omega\subset \mathbb R^d $, $ d\ge 2$, with boundary $\partial\Omega$:
\begin{equation}\label{semilinear_pde}
\begin{split}
\partial_tu-\Delta u+ F(\nabla u,u)&= f \quad  \mbox{in} \; \Omega\times(0,T],\\
u&=0 \quad  \mbox{on} \; \partial\Omega\times(0,T],\\
u(\cdot,0)&=u_0~~\mbox{in}\ \Omega.
\end{split}
\end{equation}
A challenge in the simulation of nonlinear PDEs is to reduce computational cost while preserving accuracy. To this end, a large amount of research in numerous aspects of the simulation of PDEs has been performed. These efforts include attempts to reduce computational cost by improving algorithmic efficiency, developing parallel computing schemes, and using interpolatory finite element (Interpolatory FE) techniques.

The Interpolatory FE method, also known as {\em product approximation}, the {\em group finite element method}, or finite elements with interpolated coefficients, was originally proposed by Douglas and Dupont for solving semilinear parabolic problems in \cite{MR0502033}. The technique was later rediscovered by Christie et al.\ \cite{MR641309} and then by Fletcher \cite{MR798845,MR702221}. In recent years,  the Interpolatory FE method has been used as an alternative FE method for nonlinear elliptic problems \cite{MR967844,MR731213,MR2752869,MR2273051,MR2112661}, nonlinear parabolic problems \cite{MR1030644,MR1172090,MR973559,MR3178584,MR2294957},  nonlinear hyperbolic problems \cite{MR1068202,MR2391691}, and model order reduction methods \cite{MR3403707,MR2587427}. This approach consists in replacing the nonlinear function by its interpolant in the finite dimensional space.  This simple change leads to an important benefit: the numerical quadrature for the nonlinear term is computed once before time integration, which leads  to a simplified implementation and a substantial reduction in computational cost. Furthermore, the Interpolatory FE method achieves the same convergence rates as the standard FE discretization of the PDEs.

However, to the best knowledge of the authors, the Interpolatory FE method is applicable for the problem above only if the nonlinear term $F(\nabla u, u)$ can be  written in a special ``grouped'' form, i.e., when there exist functions $\bm G$ and $H$ such that  $F(\nabla u, u) = \nabla \cdot \bm G(u) + H(u)$.  For other types of PDEs and PDE systems, the grouped form can be slightly more general. However, not all nonlinearities of interest can be written in the {\em special grouped} form which results in the limited applicability of the Interpolatory FE method.

We propose a new method to approximate the solution: the interpolatory hybridizable discontinuous Galerkin (Interpolatory HDG) method.  Specifically, we show that the interpolation idea of the Interpolatory FE method can be extended to the hybridizable discontinuous Galerkin (HDG) method, see a recent review of these methods in \cite{CockburnDurham16}, and  that the resulting Interpolatory HDG method can successfully be applied to the general nonlinear term $F(\nabla u, u)$.  For more information about HDG methods for nonlinear PDEs, see, e.g., \cite{NguyenPeraireCockburnHDGAIAAINS10,PeraireNguyenCockburnHDGAIAACNS10,NguyenPeraireCockburn11,NguyenPeraireCM12,MoroNguyenPeraireSCL12,MR3626531,NguyenPeraireCockburnEDG15,KabariaLewCockburn15,GaticaSequeira15,MR2558780,UeckermannLermusiaux16,MR3463051}.

%\blue{This is achieved The key to the \blue{Interpolatory HDG} method is to introduce the flux $\bm q  = -\nabla u$ and to balance the approximations by using equal polynomial degrees for all unknowns.  

The paper is organized as follows. We introduce the semidiscrete standard and the Interpolatory HDG methods in Section \ref{sec:HDG} and discuss their  implementation in detail for a simple time-discretization in Section \ref{sec:HDG2}. We then analyze the semidiscrete Interpolatory HDG method in Section \ref{sec:analysis} and prove optimal convergence rates for a globally Lipschitz nonlinearity.  Finally, we illustrate the performance of the Interpolatory HDG method in Section \ref{sec:numerics} with 2D and 3D numerical experiments.

\section{Semidiscrete Standard and Interpolatory HDG Formulations}
\label{sec:HDG}

%	\end{align*}
%	The HDG method introduces the flux $\bm q = -\nabla u$, and rewrites the semilinear PDE \eqref{semilinear_pde} in the mixed form
%	\begin{subequations}\label{semilinear_pde_mixed}
%		\begin{align}
%		%
%		(\bm{q},\bm{r})-(u,\nabla\cdot\bm{r})+\left\langle u, \bm{r}\cdot \bm{n}\right\rangle&=0, \label{mixed_a}\\
%		%
%		(\partial_tu, w) + (\nabla\cdot \bm{q}, w) + (F(-\bm q,u),w) &= (f,w),  \label{mixed_b}\\
%		%
%	   (u(\cdot,0),w)&=(u_0,w), \label{mixed_c}
%		\end{align}
%	\end{subequations}
%for all $(\bm{r},w)\in H(\text{div},\Omega)\times L^2(\Omega)$.

%To derive the error estimation, we assume there is a constant $L>0$ such that 
%     \begin{align}\label{lip}
%     	\|F(\bm q, u) - F(\bm r, v)\|_{\Omega} \le L(\|\bm q-\bm r\|_{\Omega} + \|u-v\|_{\Omega}) . 
%     \end{align}

\subsection{Notation}

To introduce the space-discretization by the HDG methods, we first set some notation; we follow \cite{MR2485455}, where the HDG methods were introduced in the framework of linear, steady-state diffusion.

Let $\mathcal{T}_h$ be a collection of disjoint simplexes $K$ that partition $\Omega$.  Let $\partial \mathcal{T}_h$ denote the set $\{\partial K: K\in \mathcal{T}_h\}$. For an element $K$ of the collection  $\mathcal{T}_h$, let $e = \partial K \cap \Gamma$ denote the boundary face of $ K $ if the $d-1$ Lebesgue measure of $e$ is non-zero. For two elements $K^+$ and $K^-$ of the collection $\mathcal{T}_h$, let $e = \partial K^+ \cap \partial K^-$ denote the interior face between $K^+$ and $K^-$ if the $d-1$ Lebesgue measure of $e$ is non-zero. Let $\varepsilon_h^o$ and $\varepsilon_h^{\partial}$ denote the sets of interior and boundary faces, respectively, and let $\varepsilon_h$ denote the union of  $\varepsilon_h^o$ and $\varepsilon_h^{\partial}$. We finally set
\begin{align*}
(w,v)_{\mathcal{T}_h} := \sum_{K\in\mathcal{T}_h} (w,v)_K,   \quad\quad\quad\quad\left\langle \zeta,\rho\right\rangle_{\partial\mathcal{T}_h} := \sum_{K\in\mathcal{T}_h} \left\langle \zeta,\rho\right\rangle_{\partial K},
\end{align*}
where, when $D\subset\mathbb{R}^d$, $(\cdot,\cdot)_D$  denotes the $L^2(D)$ inner product and,
when $\Gamma$ is the union of subsets of  $\mathbb{R}^{d-1}$, $\langle \cdot, \cdot\rangle_{\Gamma} $ denotes the $L^2(\Gamma)$ inner product.

Let $\mathcal{P}^k(K)$ denote the set of polynomials of degree at most $k$ on a domain $K$.  We consider the discontinuous finite element spaces
\begin{subequations}\label{spaces}
	\begin{alignat}{4}
	\bm{V}_h  &:= \{\bm{v}\in [L^2(\Omega)]^d: &&\bm{v}|_{K}\in [\mathcal{P}^k(K)]^d, &&\forall K\in \mathcal{T}_h\},\\
	{W}_h  &:= \{{w}\in L^2(\Omega): &&{w}|_{K}\in \mathcal{P}^{k}(K), &&\forall K\in \mathcal{T}_h\},\\
	{M}_h  &:= \{{\mu}\in L^2(\mathcal{\varepsilon}_h): &&{\mu}|_{e}\in \mathcal{P}^k(e), &&\forall e\in \varepsilon_h,&&\mu|_{\varepsilon_h^\partial} = 0\},
	\end{alignat}
\end{subequations}
for the flux variables, scalar variables, and trace variables, respectively.  
%
%Let  $M_h(o)$ and $M_h(\partial)$  denote the spaces defined similarly to $M_h$ with $ \varepsilon_h $ replaced by $ \varepsilon_h^o$ and  $ \varepsilon_h^{\partial}$, respectively.
Note that $M_h$ consists of functions which are continuous inside the faces (or edges) $e\in \varepsilon_h$ and discontinuous at their borders. Also, for $ w \in W_h $ and $ \bm r \in \bm V_h $, let $ \nabla w $ and $ \nabla \cdot \bm r $ denote the gradient of $ w $ and divergence and $ \bm r $ applied piecewise on each element $K\in \mathcal T_h$.

\subsection{Semidiscrete Standard HDG Formulation}

The HDG method introduces the flux $\bm q = -\nabla u$, and rewrites the semilinear PDE \eqref{semilinear_pde} in the mixed form
\begin{subequations}\label{semilinear_pde_mixed}
	\begin{align}
	(\bm{q},\bm{r})-(u,\nabla\cdot\bm{r})+\left\langle u, \bm{r}\cdot \bm{n}\right\rangle&=0, \label{mixed_a}\\
	(\partial_tu, w) + (\nabla\cdot \bm{q}, w) + (F(-\bm q,u),w) &= (f,w),  \label{mixed_b}\\
	(u(\cdot,0),w)&=(u_0,w), \label{mixed_c}
	\end{align}
\end{subequations}
for all $(\bm{r},w)\in H(\text{div},\Omega)\times L^2(\Omega)$.

To approximate the solution of the mixed weak form \eqref{semilinear_pde_mixed} of \eqref{semilinear_pde}, the standard HDG method seeks an approximate flux $\bm q_h \in \bm V_h$, primary variable $u_h \in W_h$,  and numerical boundary trace $\widehat u_h\in M_h$ satisfying
\begin{subequations}\label{semi_discretion_standard}
	\begin{align}
	(\bm{q}_h,\bm{r})_{\mathcal{T}_h}-(u_h,\nabla\cdot \bm{r})_{\mathcal{T}_h}+\left\langle\widehat{u}_h,\bm r\cdot\bm n \right\rangle_{\partial{\mathcal{T}_h}} &= 0, \label{semi_discretion_standard_a}\\
	(\partial_tu_h,w)_{\mathcal T_h}-(\bm{q}_h,\nabla w)_{\mathcal{T}_h}+\left\langle\widehat{\bm{q}}_h\cdot \bm{n},w\right\rangle_{\partial{\mathcal{T}_h}} +  ( F(-\bm q_h, u_h),w)_{\mathcal{T}_h}&= (f,w)_{\mathcal{T}_h},\label{semi_discretion_standard_b}\\
	\left\langle\widehat{\bm{q}}_h\cdot \bm{n}, \mu\right\rangle_{\partial{\mathcal{T}_h}\backslash\varepsilon^{\partial}_h} &=0,\label{semi_discretion_standard_c}
	\end{align}
\end{subequations}
for all $(\bm r,w,\mu)\in \bm V_h\times W_h\times M_h$. Here the numerical trace for the flux on $\partial\mathcal T_h$ is defined by
\begin{align*}
\widehat{\bm q}_h\cdot\bm n=\bm q_h\cdot\bm n+\tau( u_h-\widehat u_h),
\end{align*}
where $\tau$ is positive stabilization function defined on $\partial\mathcal T_h$.  The initial conditions are discretized as
\begin{align}\label{semi_discretion_standard_d}
u_h(\cdot,0) &= P u_0,
\end{align}
where $ P $ is a projection into $ W_h $.

\subsection{Semidiscrete Interpolatory HDG Formulation}
\label{sec:GHDG_semidiscrete}

To define the Interpolatory HDG space discretization of \eqref{semilinear_pde_mixed}, we first define the operator $\mathcal{I}_h$  we use to approximate the nonlinear term $F(-\bm q_h,u_h)$.

For an element $K \in \mathcal{T}_h $, let $\{\xi^K_j\}_{j=1}^{\ell_K}$ denote the FE nodal points corresponding to the nodal basis functions $\{\phi^K_j\}_{j=1}^{\ell_K}$ for $ W_h(K) $, i.e., $\phi^K_j(\xi^K_i) = \delta_{ij}$, where $\delta_{ij}$ is the Kronecker delta symbol, and $ W_h(K) = \mathrm{span}\{\phi^K_j\}_{j=1}^{\ell_K}$.  For $ g \in C(\bar{K}) $, define $ \mathcal{I}^K_h g \in W_h(K) $ by
$$
[\mathcal{I}^K_h g](x) = \sum_{j=1}^{\ell_K} g(\xi^K_j) \phi^K_j(x)  \quad  \mbox{for all $ x \in K $.}
$$
Note that this indeed defines an interpolation operator on $ K $ since $\phi^K_j(\xi^K_i) = \delta_{ij}$ implies $ [\mathcal{I}^K_h g](\xi^K_i) = g(\xi^K_i) $.

Next, we extend the definition to the set of square integrable elementwise continuous functions
$$
Z = \{ g \in L^2(\Omega) : g|_{K} \in C(\bar{K}), \forall K \in \mathcal{T}_h \}.
$$
For $ g \in Z $, define $ \mathcal{I}_h g \in W_h $ to equal the above interpolation $ \mathcal{I}^K_h g $ in $ W_h(K) $ on each element $ K $.  Note that $ \mathcal I_h g $ may be discontinuous along the faces (or edges).  Furthermore, note that $\mathcal{I}_h$ is not an interpolation operator since multiple discontinuous basis functions in $ W_h $ correspond to a single interior nodal point in $ \Omega $.  However, since $\mathcal{I}_h$ is an interpolation operator when restricted to an individual element, we call $\mathcal{I}_h$ an elementwise interpolation operator.

Now that we have defined the operator $\mathcal{I}_h$, we present the Interpolatory HDG formulation of \eqref{semilinear_pde_mixed}.  It is obtained by replacing, in the equation \eqref{semi_discretion_standard_b} of the HDG formulation, the nonlinear term $F(-\bm q_h, u_h)$ by  the elementwise interpolation $\mathcal  I_h F(-\bm q_h, u_h)$. We thus obtain, instead of
\eqref{semi_discretion_standard_b}, the equation
\begin{align}\label{semi_discretion_group_b}
(\partial_tu_h,w)_{\mathcal T_h}-(\bm{q}_h,\nabla w)_{\mathcal{T}_h}+\left\langle\widehat{\bm{q}}_h\cdot \bm{n},w\right\rangle_{\partial{\mathcal{T}_h}} +  ( \mathcal I_h F(-\bm q_h ,u_h),w)_{\mathcal{T}_h} &= (f,w)_{\mathcal{T}_h}
\end{align}
for $ w \in W_h $.

Before discussing the implementation details, we briefly discuss the computational advantage of the elementwise interpolation.  We consider the 2D case here; the 3D case is similar.  Let $ W_h = \mathrm{span}\{\phi_j\}_{j=1}^{\ell} $, where each $ \phi_j $ is a nodal FE basis function when restricted to some element.  Let $ u_h $ be represented as
$$
u_h(t) = \sum_{j=1}^{N_1} \gamma_j(t) \phi_j.
$$
Functions in the space $ \bm V_h $ can be represented componentwise using the same basis functions.  Let $ \bm q_h = [ q_{h,1}, q_{h,2} ]^T $ be represented as
$$
q_{h,1}(t) = \sum_{j=1}^{N_1} \alpha_j(t) \phi_j,  \quad	  q_{h,2}(t) = \sum_{j=1}^{N_1} \beta_j(t) \phi_j.
$$
For $ w \in W_h $, the nonlinearity in the Interpolatory HDG method takes the form
\begin{align*}
( \mathcal I_h F(-\bm q_h ,u_h),w)_{\mathcal{T}_h} &= \sum_{K \in \mathcal T_h} ( \mathcal I^K_h F(-q_{h,1},-q_{h,2},u_h),w)_{K}\\
&= \sum_{j=1}^{N_1} F(-\alpha_j,-\beta_j,\gamma_j) \, ( \phi_j,w)_{\mathcal{T}_h}.
\end{align*}

In the computation, we take test functions $ w = \phi_i $ and the approximate nonlinearity is quickly evaluated by multiplying a sparse matrix times the vector $ \mathcal{F} = [ F(-\alpha_j,-\beta_j,\gamma_j) ] $.  We provide more implementation details in  Section \ref{sec:GHDG_implementation}.
\begin{remark}
	If  $W_h$ and $\bm V_h$  are the space of the lowest order, i.e., $k=0$,
	then we have $\mathcal I_h F(-\bm q_h,u_h) = F(-\bm q_h,u_h)$ for any $\bm q_h\in \bm V_h$ and $u_h\in W_h$.  This implies that the standard HDG is equivalent to the  Interpolatory HDG when $k=0$.
\end{remark}

\section{Standard and Interpolatory HDG Implementation}
\label{sec:HDG2}

Next we  discuss the main implementation details for the standard and the Interpolatory HDG methods by using a simple time discretization approach: backward Euler with a Newton iteration to solve the nonlinear system at each time step.  Implementation details are similar for other fully implicit time stepping methods.  Interpolatory HDG can also be used with other time discretization approaches, such as implicit-explicit methods and adaptive time stepping approaches.  Interpolatory HDG will provide the greatest computational savings for fully implicit methods.  Even for a time stepping method that only solves one linear system per time step, the Interpolatory HDG can be used to avoid the numerical quadrature required by standard HDG at each time step.

To compare  the standard HDG with the Interpolatory HDG, we suppose that the nonlinear term depends on $ u $ only, i.e., $F(-\bm q, u) = F(u)$.  The general case of a general nonlinearity $F(-\bm q, u)$ is treated in Section \ref{sec:GHDG_general_implementation}.  We only give details for the implementation in 2D; the implementation in 3D is similar.

Let  $N$ be a positive integer and define the time step $\Delta t = T/N$. We denote the approximation of $(\bm q_h(t),u_h(t),\widehat u_h(t))$ by $(\bm q^n_h,u^n_h,\widehat u^n_h)$ at the discrete time $t_n = n\Delta t $, for $n = 0,1,2,\ldots,N$.  For both the standard HDG and the Interpolatory HDG, we replace the time derivative $\partial_tu_h$ in \eqref{semi_discretion_standard} by the backward Euler difference quotient 
\begin{align}
\partial^+_tu^n_h = \frac{u^n_h-u^{n-1}_h}{\Delta t}.\label{backward_Euler}
\end{align}
This gives the following fully discrete method: find $(\bm q^n_h,u^n_h,\widehat u^n_h)\in \bm V_h\times W_h\times M_h$ satisfying
\begin{subequations}\label{full_discretion_standard}
	\begin{align}
	(\bm{q}^n_h,\bm{r})_{\mathcal{T}_h}-(u^n_h,\nabla\cdot \bm{r})_{\mathcal{T}_h}+\left\langle\widehat{u}^n_h,\bm{r\cdot n} \right\rangle_{\partial{\mathcal{T}_h}} &= 0,\label{full_discretion_standard_a} \\
	(\partial^+_tu^n_h,w)_{\mathcal T_h}-(\bm{q}^n_h,\nabla w)_{\mathcal{T}_h}+\left\langle\widehat{\bm{q}}^n_h\cdot \bm{n},w\right\rangle_{\partial{\mathcal{T}_h}} + ( F(-\bm q_h^n, u_h^n),w)_{\mathcal{T}_h}&= (f^n,w)_{\mathcal{T}_h},\label{full_discretion_standard_b}\\
	\left\langle\widehat{\bm{q}}^n_h\cdot \bm{n}, \mu\right\rangle_{\partial{\mathcal{T}_h}\backslash\varepsilon^{\partial}_h} &=0,\label{full_discretion_standard_c}\\
	u^0_h &=P  u_0,\label{full_discretion_standard_d}
	\end{align}
\end{subequations}
for all $(\bm r,w,\mu)\in \bm V_h\times W_h\times M_h$ and $n=1,2,\ldots,N$.  In \eqref{full_discretion_standard}, $f^n=f(\cdot,t_n)$ and the numerical trace for the flux on $\partial\mathcal T_h$ is defined by
\begin{align}\label{num_tr_s}
\widehat{\bm q}_h^n\cdot\bm n=\bm q_h^n\cdot\bm n+\tau(u_h^n-\widehat u_h^n).
\end{align}	
The full Interpolatory HDG discretization only changes the nonlinear term $F(-\bm q_h^n, u_h^n)$ in \eqref{full_discretion_standard_b} into the elementwise interpolation $\mathcal  I_h F(-\bm q_h^n, u_h^n)$, i.e., \eqref{full_discretion_standard_b} is replaced by
\begin{align}
(\partial^+_tu^n_h,w)_{\mathcal T_h}-(\bm{q}^n_h,\nabla w)_{\mathcal{T}_h}+\left\langle\widehat{\bm{q}}^n_h\cdot \bm{n},w\right\rangle_{\partial{\mathcal{T}_h}} + (\mathcal I_h F(-\bm q_h^n, u_h^n),w)_{\mathcal{T}_h}&= (f^n,w)_{\mathcal{T}_h}.
\end{align}

\subsection{Standard HDG Implementation}

% To arrive at the \blue{the standard HDG} formulation we implement numerically, we insert  

After substituting \eqref{num_tr_s} into \eqref{full_discretion_standard_a}-\eqref{full_discretion_standard_c} and integrating by parts, we have $(\bm q^n_h,u^n_h,\widehat u^n_h)\in \bm V_h\times W_h\times M_h$ satisfies
\begin{equation}\label{full_discretion_standard_im}
\begin{split}
(\bm{q}^n_h,\bm{r})_{\mathcal{T}_h}-(u^n_h,\nabla\cdot \bm{r})_{\mathcal{T}_h}+\langle\widehat{u}^n_h,\bm{r\cdot n} \rangle_{\partial{\mathcal{T}_h}} &= 0, \\
(\partial^+_tu^n_h,w)_{\mathcal T_h}+(\nabla\cdot\bm{q}^n_h, w)_{\mathcal{T}_h}+\langle\tau(u_h^n - \widehat u_h^n),w\rangle_{\partial{\mathcal{T}_h}} + ( F( u_h^n),w)_{\mathcal{T}_h}&= (f^n,w)_{\mathcal{T}_h},\\
\langle {\bm{q}}^n_h\cdot \bm{n} + \tau(u_h^n - \widehat u_h^n), \mu\rangle_{\partial{\mathcal{T}_h}\backslash\varepsilon^{\partial}_h} &=0,\\
u^0_h &= P u_0,%,\quad \widehat u^0_h=P_Mu_0
\end{split}
\end{equation}
for all $(\bm r,w,\mu)\in \bm V_h\times W_h\times M_h$ and $n=1,2,\ldots,N$.

At each time step $t_n $ for $ 1\le n\le N$, given an initial guess  $(\bm q_h^{n,(0)}, u_h^{n,(0)},\widehat u_h^{n,(0)})$, Newton's method generates the sequence $(\bm q_h^{n,(m)}, u_h^{n,(m)},\widehat u_h^{n,(m)})$ for $￼m = 1, 2, 3,\ldots$ by solving the sequence of linear problems
\begin{equation}\label{full_discretion_standard_im2}
\begin{split}
(\bm{q}^{n,(m)}_h,\bm{r})_{\mathcal{T}_h}-(u^{n,(m)}_h,\nabla\cdot \bm{r})_{\mathcal{T}_h}+\langle\widehat{u}^{n,(m)}_h,\bm r \cdot \bm n \rangle_{\partial{\mathcal{T}_h}} &= 0, \\
\frac {1}{\Delta t}({u^{n,(m)}_h-u^{n-1}_h}, w)_{\mathcal T_h}+(\nabla\cdot\bm{q}^{n,(m)}_h, w)_{\mathcal{T}_h} \quad  &\\
+\langle \tau( u^{n,(m)}_h -\widehat u^{n,(m)}_h) ,w\rangle_{\partial{\mathcal{T}_h}} + ( F'(u^{n,(m-1)}_h)u^{n,(m)}_h,w)_{\mathcal{T}_h} \quad  &\\
- ( F'(u^{n,(m-1)}_h)u^{n,(m-1)}_h,w)_{\mathcal{T}_h} + ( F(u^{n,(m-1)}_h),w)_{\mathcal{T}_h}&= (f^n,w)_{\mathcal{T}_h},\\
\langle {\bm{q}}^{n,(m)}_h \cdot \bm{n}+\tau( u^{n,(m)}_h -\widehat u^{n,(m)}_h), \mu\rangle_{\partial{\mathcal{T}_h}\backslash\varepsilon^{\partial}_h} &=0,
\end{split}
\end{equation}
for all $(\bm r,w,\mu)\in \bm V_h\times W_h\times M_h$.  When the iteration stops, we set $u_h^n = u_h^{n,(m) }$.

Assume $\bm{V}_h = \mbox{span}\{\phi_i\}_{i=1}^{N_1} \times \mbox{span}\{\phi_i\}_{i=1}^{N_1}$, $W_h=\mbox{span}\{\phi_i\}_{i=1}^{N_1}$, and $M_h=\mbox{span}\{\psi_i\}_{i=1}^{N_2}$. Then
\begin{equation}\label{expre}
\begin{split}
q^{n,(m)}_{1h}= \sum_{j=1}^{N_1}\alpha_{j}^{n,(m)}\phi_j,  ~	q^{n,(m)}_{2h}= \sum_{j=1}^{N_1}\beta_{j}^{n,(m)}\phi_j,  \\
u^{n,(m)}_h= \sum_{j=1}^{N_1}\gamma_{j}^{n,(m)}\phi_j, ~
\widehat{u}^{n,(m)}_h= \sum_{j=1}^{N_2}\zeta_{j}^{n,(m)}\psi_{j}.
\end{split}
\end{equation}
Substitute \eqref{expre} into \eqref{full_discretion_standard_im2} and use the corresponding  test functions to test \eqref{full_discretion_standard_im2}, respectively, to obtain the matrix equation
\begin{align}\label{system_equation}
\begin{bmatrix}
A_1 & 0 &-A_2  & A_4 \\
0 & A_1 &-A_3  & A_5 \\
A_2^T& A_3^T&A_6 +{\Delta t}^{-1}A_1+A_9^{n,(m)}&-A_7\\
A_4^T& A_5^T&A_7^T&-A_8
\end{bmatrix}
\left[ {\begin{array}{*{20}{c}}
	\bm\alpha^{n,(m)}\\
	\bm\beta^{n,(m)}\\
	\bm\gamma^{n,(m)}\\
	\bm\zeta^{n,(m)}
	\end{array}} \right]
=\left[ {\begin{array}{*{20}{c}}
	0\\
	0\\
	b^n \\
	0
	\end{array}} \right],
\end{align}
where $\bm\alpha^{n,(m)} $, $\bm\beta^{n,(m)} $, $\bm\gamma^{n,(m)}$, and $ \bm\zeta^{n,(m)} $ are the coefficient vectors and
\begin{gather*}
A_1= [(\phi_j,\phi_i )_{\mathcal{T}_h}],  ~  A_2 = [(\phi_j,\frac{\partial\phi_i}{\partial x})_{\mathcal{T}_h}],  ~  A_3= [(\phi_j,\frac{\partial\phi_i}{\partial y})_{\mathcal{T}_h}],~  A_4= [(\psi_j,{\phi}_i n_1)_{\mathcal{T}_h}],\\
A_5= [(\psi_j,{\phi}_i n_2)_{\mathcal{T}_h}], \qquad  A_6 = [\langle \tau\phi_j, \phi_i \rangle_{\partial{{\mathcal{T}_h}}}], \qquad   A_7 =  [\left\langle \tau\psi_j,\phi_i\right\rangle_{\partial\mathcal{T}_h}],\\
A_8= [\left\langle \tau\psi_j,\psi_i\right\rangle_{\partial\mathcal{T}_h}], \  b_1^n = [(f(t_n,\cdot),\phi_i )_{\mathcal{T}_h}],\quad
b_2 =	[(F(u^{n,(m-1)}_h),\phi_i )_{\mathcal{T}_h}],\\
b^n = b_1^n+ \Delta t^{-1} A_1 \bm\gamma^{n-1} + A_9^{n,(m)} \bm\gamma^{n,(m-1)} -b_2^{n,(m)},\\
A_9^{n,(m)} =	[(F'(u^{n,(m-1)}_h)\phi_j,\phi_i )_{\mathcal{T}_h}].
\end{gather*}
We need to perform numerical quadrature to construct the matrix $A_9^{n,(m)}$ at each time $t_n$ and each step in the iteration, and then solve the linear system \eqref{system_equation}.

\subsection{Interpolatory HDG Implementation}
\label{sec:GHDG_implementation}

The full Interpolatory HDG discretization is to find $(\bm q^n_h,u^n_h,\widehat u^n_h)\in \bm V_h\times W_h\times M_h$ such that
\begin{equation}\label{full_discretion_Group_im}
\begin{split}
(\bm{q}^n_h,\bm{r})_{\mathcal{T}_h}-(u^n_h,\nabla\cdot \bm{r})_{\mathcal{T}_h}+\left\langle\widehat{u}^n_h,\bm{r\cdot n} \right\rangle_{\partial{\mathcal{T}_h}} &= 0, \\
(\partial^+_tu^n_h,w)_{\mathcal T_h}+(\nabla\cdot\bm{q}^n_h, w)_{\mathcal{T}_h}+\langle\tau(u_h^n - \widehat u_h^n),w\rangle_{\partial{\mathcal{T}_h}} + ( \mathcal I_h F(u_h^n),w)_{\mathcal{T}_h}&= (f^n,w)_{\mathcal{T}_h},\\
\langle {\bm{q}}^n_h\cdot \bm{n} + \tau(u_h^n - \widehat u_h^n), \mu\rangle_{\partial{\mathcal{T}_h}\backslash\varepsilon^{\partial}_h} &=0,\\
u^0_h &=P  u_0,%,\quad \widehat u^0_h=P_Mu_0,
\end{split}
\end{equation}
for all $(\bm r,w,\mu)\in \bm V_h\times W_h\times M_h$ and $n=1,2,\ldots,N$.

The only difference between the Interpolatory HDG and the standard HDG is in the nonlinear terms; the linear parts are the same.  As indicated in Section \ref{sec:GHDG_semidiscrete}, once we test using $ w = \phi_i $ we can express the Interpolatory HDG nonlinear term by the matrix-vector product
\begin{align}
[ ( \mathcal I_h F(u_h^n),\phi_i)_{\mathcal{T}_h} ] = A_1 \mathcal F(\bm\gamma^{n}),
\end{align}
where $\mathcal F$ is defined by
\begin{align}
\mathcal F(\bm\gamma^{n}) &= [F(\gamma_1^{n}), F(\gamma_2^{n}),\cdots,F(\gamma_{N_1}^{n})]^T.
\end{align}

Then the system \eqref{full_discretion_Group_im} can be rewritten as
\begin{align}\label{system_equation_group3}
\underbrace{\begin{bmatrix}
	A_1 & 0 &-A_2  & A_4 \\
	0 & A_1 &-A_3  & A_5 \\
	A_2^T& A_3^T&A_6 +{\Delta t}^{-1}A_1&-A_7\\
	A_4^T& A_5^T&A_7^T&-A_8
	\end{bmatrix}}_{M}
\underbrace{\left[ {\begin{array}{*{20}{c}}
		\bm\alpha^{n}\\
		\bm\beta^{n}\\
		\bm\gamma^{n}\\
		\bm\zeta^{n}\\
		\end{array}} \right]}_{\bm x_{n}}+
\underbrace{\left[ {\begin{array}{*{20}{c}}
		0\\
		0\\
		A_1\mathcal F(\bm\gamma^{n})\\
		0
		\end{array}} \right]}_{\mathscr F(\bm x_{n})}
=\underbrace{\left[ {\begin{array}{*{20}{c}}
		0\\
		0\\
		b_1^n+{\Delta t}^{-1}A_1\bm\gamma^{n-1} \\
		0
		\end{array}} \right]}_{\bm b_n},
\end{align}
i.e., 
\begin{align}\label{system_equation_group4}
M\bm x_n + \mathscr F(\bm x_n) = \bm b_n.
\end{align}

To apply Newton's method to solve the nonlinear equations \eqref{system_equation_group4}, define $G:\mathbb R^{3N_1+N_2}\to \mathbb R^{3N_1+N_2}$ by
\begin{align}\label{system_equation_group5}
G(\bm x_n ) = M\bm x_n + \mathscr F(\bm x_n) - \bm b_n.
\end{align}
At each time step $t_n $ for $ 1\le n\le N$, given an initial guess $\bm x_n^{(0)}$ Newton's method generates the sequence $\bm x_n^{(m)}$ for $￼m = 1, 2, 3,\ldots$ by solving the sequence of linear problems
\begin{align}\label{system_equation_group6}
\bm x_n^{(m)} =\bm x_n^{(m-1)} - \left[G'(\bm x_n^{(m-1)})\right]^{-1}G(\bm x_n^{(m-1)}),
\end{align}
where  the Jacobian matrix $G'(\bm x_n^{(m-1)})$ is given by
\begin{align}\label{system_equation_group7}
G'(\bm x_n^{(m-1)}) = M+\mathscr F'(\bm x_n^{(m-1)}).
\end{align}
An excellent property of the interpolatory method is that we can explicitly compute $\mathscr F'(\bm x_n^{(m-1)})$ by
\begin{align*}
\mathscr F'(\bm x_n^{(m-1)}) = \begin{bmatrix}
0 & 0 &0  & 0 \\
0 & 0 &0  & 0 \\
0& 0&A_{10}^{n,(m)}&0\\
0 & 0 &0  & 0 
\end{bmatrix},
\end{align*}
where $ A_{10}^{n,(m)} $ is quickly and easily computed using sparse matrix operations by
\begin{align*}
A_{10}^{n,(m)} = A_1 \, \text{diag}(\mathcal F'(\bm{\gamma}^{n,(m-1)})).
\end{align*}

We can rewrite equation \eqref{system_equation_group6} as
\begin{align}\label{system_equation_group1}
\begin{bmatrix}
A_1 & 0 &-A_2  & A_4 \\
0 & A_1 &-A_3  & A_5 \\
A_2^T& A_3^T&A_6 +{\Delta t}^{-1}A_1+	A_{10}^{n,(m)}&-A_7\\
A_4^T& A_5^T&A_7^T&-A_8
\end{bmatrix}
\left[ {\begin{array}{*{20}{c}}
	\bm\alpha^{n,(m)}\\
	\bm\beta^{n,(m)}\\
	\bm\gamma^{n,(m)}\\
	\bm\zeta^{n,(m)}
	\end{array}} \right]
=\bm {\widetilde  b},
\end{align}
where 
\begin{align}
\bm {\widetilde  b} =  G'(\bm x_n^{(m-1)}) \bm x_n^{(m-1)} - G(\bm x_n^{(m-1)}).
\end{align}

\begin{remark}
	The global matrix does need to be updated at each time step and each iteration; however, the Jacobian matrix can be obtained simply by multiplying $A_1$ by $\mathrm{diag}(\mathcal F'(\bm \gamma_n^{(m-1)})$. Therefore, the computation is reduced greatly compared to standard HDG.
\end{remark}

\subsection{Local Solver}
\label{sec:local_solver}

One of the main advantages of the HDG methods compared to other DG methods is that we can
{\em locally eliminate} the unknowns $\bm \alpha_n^{(m)}$, $\bm \beta_n^{(m)}$ and $\bm \gamma_n^{(m)}$ from the large system of equations \eqref{system_equation}. 
Let us show how to carry out the local elimination for equations of the Interpolatory HDG method.

The system  \eqref{system_equation_group1} can be rewritten as
\begin{align}\label{system_equation2}
\begin{bmatrix}
B_1 & -B_2&B_3\\
B_2^T & B_4&-B_5\\
B_3^T&B_5^T&B_6\\
\end{bmatrix}
\left[ {\begin{array}{*{20}{c}}
	\bm{x}\\
	\bm{y}\\
	\bm{z}
	\end{array}} \right]
=\left[ {\begin{array}{*{20}{c}}
	b_1\\
	b_2\\
	b_3
	\end{array}} \right],
\end{align}
where $\bm{x}=[\bm \alpha^{n,(m)};\bm \beta^{n,(m)}]$, $\bm{y}=\bm{\gamma}^{n,(m)}$, $\bm{z}=\bm{\zeta}^{n,(m)}$, $ \bm {\widetilde  b} = [ b_1;b_2;b_3] $, and $\{B_i\}_{i=1}^6$ are the corresponding blocks of the coefficient matrix in \eqref{system_equation_group1}.  The system \eqref{system_equation2} is equivalent to following equations:
\begin{subequations}\label{algebra_full}
	\begin{align}
	B_1 \bm x -B_2\bm y +B_3\bm z&= b_1,\label{algebre_full_a}\\
	B_2^T \bm x +B_4\bm y -B_5\bm z &= b_2,\label{algebre_full_b}\\
	B_3^T\bm x+ B_5^T\bm y + B_6 \bm z&=b_3.\label{algebre_full_c}
	\end{align}
\end{subequations}
We efficiently solve \eqref{algebre_full_a} and \eqref{algebre_full_b} to express $\bm x$ and $\bm y$ in terms of $\bm z$.

To do this, note that since $\bm V_h$ and $W_h$ are discontinuous finite element spaces the matrices $B_1$ and $B_4$ are both block diagonal with small blocks. Therefore, these matrices can be easily inverted, and the inverses are also block diagonal with small blocks.  Furthermore, $ B_1 $ and $ B_1^{-1} $ are both positive definite.  Also introduce
\begin{align*}
Q = B_2^TB_1^{-1}B_2+B_4.
\end{align*}
As mentioned above, $B_1$ is block diagonal with small blocks and therefore it is easy to invert.  The matrix $B_2$ not block diagonal; however, $B_2 = [ A_2, A_3 ]^T $ and $ A_2 $ and $ A_3 $ are both block diagonal with small blocks.  Therefore, $Q$ is block diagonal with small blocks and is easily inverted.  Also, since $ B_4 = A_6 +{\Delta t}^{-1}A_1+	A_{10}^{n,(m)} $ and both $ A_1 $ and $ A_6 $ are positive definite, $Q$ is guaranteed to be invertible if $ \Delta t $ is small enough or if $ F'(\gamma) \geq 0 $ for any $ \gamma $.

Now solve \eqref{algebre_full_a} and \eqref{algebre_full_b} to obtain
\begin{align}
\bm x&= B_1^{-1}B_2Q^{-1}\left((B_5+B_2^TB_1^{-1}B_3)\bm z+ b_2-B_2^TB_1^{-1}b_1\right) -B_1^{-1}B_3\bm z + B_1^{-1}b_1\nonumber\\
&=:\tilde B_1 \bm z +\tilde b_1,\label{local_eli_a}\\
\bm y &= Q^{-1} \left((B_5+B_2^TB_1^{-1}B_3)\bm z + b_2-B_2^TB_1^{-1}b_1\right)\nonumber\\
&=:\tilde B_2 \bm{\gamma}^n +\tilde b_2.\label{local_eli_b}
\end{align}
Then insert $\bm x$ and $\bm y$ into \eqref{algebre_full_c} to obtain the final system only involving $\bm z$
\begin{align}\label{local_eli_c}
(B_3^T \tilde B_1 + B_5^T \tilde B_2 +  B_6) \bm z = b_3 -B_3^T\tilde b_1 -B_5^T \tilde b_2
\end{align}

\begin{remark}
	For HDG methods, the standard approach is to first compute the local solver independently on each element and then assemble the global system.  The process we follow here is to first assemble the global system and then reduce its dimension by simple block-diagonal algebraic operations.  The two approaches are equivalent.
\end{remark}

Equations \eqref{local_eli_a}-\eqref{local_eli_b} say we can express the approximate scalar state variable and flux in terms of the approximate traces on the element boundaries.  The global equation \eqref{local_eli_c} only involves the approximate traces. Therefore, the high number of globally coupled degrees of freedom in the HDG method is significantly reduced. This is an excellent feature of HDG methods.

\section{Error Analysis}
\label{sec:analysis}

Next, we carry out an error analysis of the Interpolatory HDG method. In this first work on Interpolatory HDG, we assume the nonlinearity is globally Lipschitz, i.e., there is a constant $L>0$ such that 
\begin{align}\label{lip}
|F(\bm q, u) - F(\bm r, v)|_{\mathbb R}\le L(|\bm q-\bm r|_{\mathbb R^d} + |u-v|_{\mathbb R})
\end{align}
for all $ \bm q, \bm r \in \mathbb{R}^d $ and $ u, v \in \mathbb{R}$.  It would be interesting to 
investigate less restrictive assumptions on the nonlinearity; we leave this to be considered elsewhere.

We assume the solution of the PDE \eqref{semilinear_pde} exists and is unique for $ t \in [0,T] $.  We also assume the standard and the Interpolatory HDG equations have unique solutions on $ [0,T] $.  We assume the mesh is uniformly shape regular.  
Furthermore, for both methods we assume the projection $ P $ used for the initial condition is given by $ P = \Pi_W $, where $ \Pi_W $ is introduced below.

We adopt the standard notation $W^{m,p}(\Omega)$ for Sobolev spaces on $\Omega$ with norm $\|\cdot\|_{m,p,\Omega}$ and seminorm $|\cdot|_{m,p,\Omega}$ . When $p=2$, instead of $W^{m,2}(\Omega)$, we write $H^{m}(\Omega)$, and omit the index $p$ in the corresponding norm and seminorms. Also, we set $H_0^1(\Omega):=\{v\in H^1(\Omega):v=0 \;\mbox{on}\; \partial \Omega\}$.  Finally, we set $	H(\text{div},\Omega) := \{\bm{v}\in [L^2(\Omega)]^d, \nabla\cdot \bm{v}\in L^2(\Omega)\}.$
\subsection{Auxiliary projections}

We describe a couple of projections which will be very useful in our analysis.

We begin by introducing the projection operator $\Pi_h(\bm{q},u) := (\bm{\Pi}_{V} \bm{q},\Pi_{W}u)$ defined in \cite{MR2629996}, where 
$\bm{\Pi}_{V} \bm{q}$ and $\Pi_{W}u$ denote components of the projection of $\bm{q}$ and $u$ into $\bm{V}_h$ and $W_h$, respectively. The value of the projection on each element $K\in \mathcal{T}_h$ is determined by requiring that the components satisfy the equations
\begin{subequations}\label{HDG_projection_operator}
	\begin{align}
	(\bm\Pi_V\bm q,\bm r)_K &= (\bm q,\bm r)_K,\qquad\qquad \forall \bm r\in[\mathcal P_{k-1}(K)]^d,\label{projection_operator_1}\\
	(\Pi_Wu, w)_K &= (u, w)_K,\qquad\qquad \forall  w\in \mathcal P_{k-1}(K	),\label{projection_operator_2}\\
	\langle\bm\Pi_V\bm q\cdot\bm n+\tau\Pi_Wu,\mu\rangle_{e} &= \langle\bm q\cdot\bm n+\tau u,\mu\rangle_{e},~\;\forall \mu\in \mathcal P_{k}(e),\label{projection_operator_3}
	\end{align}
\end{subequations}
for all faces $e$ of the simplex $K$. 
We also need $ P_M $, the standard $L^2$-orthogonal projection into $M_h$, which satisfies
\begin{align}\label{standpm}
\left\langle P_M y-y, \mu\right\rangle_e = 0, \quad \forall \mu\in M_h.
\end{align}

The following lemma was established in \cite{MR2629996} and provides the approximation properties of the projection operator \eqref{HDG_projection_operator}.
\begin{lemma}\label{pro_error}
	Suppose $k\ge 0$, $\tau|_{\partial K}$ is nonnegative and $\tau_K^{\max}:=\max\tau|_{\partial K}>0$. Then the system \eqref{HDG_projection_operator} is uniquely solvable for $\bm{\Pi}_V\bm{q}$ and $\Pi_W u$. Furthermore, there is a constant $C$ independent of $K$ and $\tau$ such that 
	\begin{align}
	\|{\bm{\Pi}_V}\bm{q}-q\|_K \leq Ch_{K}^{\ell_{\bm{q}}+1}|\bm{q}|_{\bm{H}^{\ell_{\bm{q}}+1}(K)}+Ch_{K}^{\ell_{{u}}+1}\tau_{K}^{*}{|u|}_{{H}^{\ell_{{u}}+1}(K)}\label{Proerr_q}\\
	\|{{\Pi}_W}{u}-u\|_K \leq Ch_{K}^{\ell_{{u}}+1}|{u}|_{{H}^{\ell_{{u}}+1}(K)}+C\frac{h_{K}^{\ell_{{\bm{q}}}+1}}{\tau_K^{\max}}{|\nabla\cdot \bm{q}|}_{{H}^{\ell_{\bm{q}}}(K)}
	\end{align}
	for $\ell_{\bm{q}},\ell_{u}$ in $[0,k]$. Here $\tau_K^{*}:=\max\tau|_{{\partial K}\backslash F^{*}}$, where $F^{*}$ is a face of $K$ at which $\tau|_{\partial K}$ is maximum.
\end{lemma}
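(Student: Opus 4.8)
The plan is to treat the two assertions of the lemma—unique solvability of the local system \eqref{HDG_projection_operator} and the two approximation bounds—separately, and since $\Pi_h$ is defined element by element, I would fix an arbitrary $K\in\mathcal T_h$ throughout.

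For unique solvability I would argue by a dimension count combined with injectivity of the homogeneous problem. The number of scalar conditions imposed by \eqref{projection_operator_1}, \eqref{projection_operator_2} and \eqref{projection_operator_3} equals $\dim\big([\mathcal P_k(K)]^d\times\mathcal P_k(K)\big)$: this is exactly Pascal's identity $\binom{k+d}{d}=\binom{k+d-1}{d}+\binom{k+d-1}{d-1}$, relating the interior moments in \eqref{projection_operator_1}--\eqref{projection_operator_2} to the face moments in \eqref{projection_operator_3}. It therefore suffices to show that setting $\bm q=\bm 0$, $u=0$ forces $\bm\delta:=\bm\Pi_V\bm 0$ and $\delta:=\Pi_W 0$ to vanish. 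I would first integrate $(\bm\delta,\nabla\delta)_K$ by parts; since $\nabla\delta\in[\mathcal P_{k-1}(K)]^d$ the left-hand side vanishes by \eqref{projection_operator_1}, and since $\nabla\cdot\bm\delta\in\mathcal P_{k-1}(K)$ the volume term on the right vanishes by \eqref{projection_operator_2}, leaving $\langle\bm\delta\cdot\bm n,\delta\rangle_{\partial K}=0$. Choosing $\mu=\delta|_e\in\mathcal P_k(e)$ in \eqref{projection_operator_3} and summing over the faces then gives $\langle\tau\delta,\delta\rangle_{\partial K}=0$; because $\tau\ge 0$ with $\tau_K^{\max}>0$, this forces $\delta=0$ on the maximizing face $F^*$. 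Factoring $\delta=\lambda_{F^*}\,r$ with $\lambda_{F^*}$ the barycentric coordinate vanishing on $F^*$ and $r\in\mathcal P_{k-1}(K)$, \eqref{projection_operator_2} yields $\int_K\lambda_{F^*}r^2=0$ and hence $\delta\equiv 0$. With $\delta=0$, \eqref{projection_operator_3} reduces to $\langle\bm\delta\cdot\bm n,\mu\rangle_e=0$ for all $\mu\in\mathcal P_k(e)$, so the normal trace of $\bm\delta$ vanishes on $\partial K$; a further integration by parts against $\phi\in\mathcal P_k(K)$ gives $\nabla\cdot\bm\delta=0$, and a standard argument using the vanishing normal trace, the divergence-free property, and the orthogonality \eqref{projection_operator_1} then forces $\bm\delta=\bm 0$.

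For the approximation estimates, the key preliminary observation is that $(\bm q,u)$ itself solves \eqref{HDG_projection_operator} whenever $\bm q\in[\mathcal P_k(K)]^d$ and $u\in\mathcal P_k(K)$, so by the uniqueness just established $\Pi_h$ reproduces polynomials of degree at most $k$ exactly. I would then transport the problem to the reference simplex $\widehat K$ by the affine map, where $I-\Pi_h$ is a bounded operator annihilating $[\mathcal P_k(\widehat K)]^d\times\mathcal P_k(\widehat K)$; the Bramble--Hilbert lemma bounds the reference-element projection error by the $H^{\ell_{\bm q}+1}$ and $H^{\ell_u+1}$ seminorms, and scaling back to $K$ produces the powers $h_K^{\ell_{\bm q}+1}$ and $h_K^{\ell_u+1}$ in \eqref{Proerr_q} and in the companion bound for $\Pi_W u-u$. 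The coupling of the flux and scalar errors through the stabilization term in \eqref{projection_operator_3} is what generates the cross terms weighted by $\tau_K^{*}$ in \eqref{Proerr_q} and by $1/\tau_K^{\max}$ in the bound for $\Pi_W u-u$.

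The step I expect to be the main obstacle is obtaining these bounds with the precise, asymmetric dependence on the stabilization function and with a constant $C$ independent of both $K$ and $\tau$. This requires a reference-element analysis in which $\tau$ is retained as a free parameter rather than scaled away, together with a separate accounting of the distinguished face $F^*$ where $\tau$ is maximal versus the remaining faces—this is precisely what distinguishes $\tau_K^{*}=\max\tau|_{\partial K\setminus F^*}$ from $\tau_K^{\max}$. The delicacy is that \eqref{projection_operator_3} ties $\bm\Pi_V\bm q\cdot\bm n$ to $\tau\,\Pi_W u$ face by face, so the two error contributions must be disentangled uniformly in the size and the variation of $\tau$; since this is exactly the content of the projection-based analysis of \cite{MR2629996}, I would follow that development for the $\tau$-explicit scaling bookkeeping.
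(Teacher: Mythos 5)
The first thing to note is that the paper contains no proof of this lemma at all: it is quoted directly from \cite{MR2629996}, with the text stating that the lemma ``was established in \cite{MR2629996}.'' So your proposal can only be compared against that reference, not against an in-paper argument. Measured against it, your unisolvency argument is correct and is essentially the argument given there: the equation-versus-unknown count via Pascal's identity, then injectivity of the homogeneous system by taking $(\bm\delta,\nabla\delta)_K$, integrating by parts, and using \eqref{projection_operator_1}--\eqref{projection_operator_2} to isolate $\langle\tau\delta,\delta\rangle_{\partial K}=0$, the bubble factorization $\delta=\lambda_{F^*}r$ to conclude $\delta\equiv 0$, and finally $\nabla\cdot\bm\delta=0$ together with the vanishing normal trace. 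The one step you wave at --- ``a standard argument \dots forces $\bm\delta=\bm 0$'' --- does require an actual argument (in 2D: write $\bm\delta=\mathrm{curl}(bs)$ with $b$ the element bubble and $s\in\mathcal P^{k-2}(K)$, choose $\bm w=(w_1,w_2)\in[\mathcal P^{k-1}(K)]^2$ with $\partial_x w_2-\partial_y w_1=s$, and use \eqref{projection_operator_1} to get $\int_K b\,s^2=0$; in 3D one needs a vector potential with vanishing tangential trace), but this is standard BDM-type unisolvence reasoning, not a conceptual gap.

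The approximation estimates are a different matter. Your plan --- polynomial reproduction plus Bramble--Hilbert on the reference element --- cannot, as literally described, produce \eqref{Proerr_q} and its companion bound, because the projection itself depends on $\tau$: the reference-element operator norm of $I-\Pi_h$, hence the Bramble--Hilbert constant, depends on $\tau$ (and degenerates as $\tau\to 0$, since solvability requires $\tau_K^{\max}>0$). A generic argument therefore yields a constant $C(\tau)$, while the lemma asserts $C$ independent of $\tau$ with the $\tau$-dependence appearing only through the explicit asymmetric weights $\tau_K^{*}$ and $1/\tau_K^{\max}$. Moreover, the generic route would place $|\bm q|_{\bm H^{\ell_{\bm q}+1}(K)}$ in the $\Pi_W$ estimate, whereas the lemma contains only the weaker seminorm $|\nabla\cdot\bm q|_{H^{\ell_{\bm q}}(K)}$ --- a structural refinement coming from the fact that $\bm q$ couples to the $u$-equations only through its normal traces, i.e., through its divergence. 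You correctly identify this as ``the main obstacle,'' but you resolve it only by deferring to the $\tau$-explicit analysis of \cite{MR2629996}. Since the paper defers the entire lemma to that same reference, your attempt is no less complete than the paper's treatment; but as a standalone proof, the estimates --- which are the part of the lemma actually used in the error analysis of Theorem \ref{main_err_qu} --- remain unproved in your proposal.
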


The second auxiliary operator comes from standard FE theory. Indeed, we have the following interpolation error estimates \cite{MR2373954}.
\begin{lemma}\label{lemmainter}
	Suppose $k\ge 0$. If $w\in C(\bar K)\cap H^{k+1}(K)$ and $\bm r\in [C(\bar K)]^d \cap[ H^{k+1}(K)]^d$, then there exists a constant $C$ independent of $K\in\mathcal T_h$ such that 
	\begin{align}\label{inter}
	\|w - \mathcal  I_h w\|_K \le Ch^{k+1} \|w\|_{k+1,K},\\
	\|\bm r - {\mathcal I}_h \bm r\|_K \le Ch^{k+1} \|\bm r\|_{k+1,K}.
	\end{align}
\end{lemma}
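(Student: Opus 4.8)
The plan is to reduce the vector-valued estimate to the scalar one and then prove the scalar estimate by the classical reference-element scaling technique. Since $\mathcal{I}_h$ acts independently on each Cartesian component of $\bm r$, we have $\bm r - \mathcal{I}_h\bm r = (r_i - \mathcal{I}_h r_i)_{i=1}^d$ and hence $\|\bm r - \mathcal{I}_h\bm r\|_K^2 = \sum_{i=1}^d \|r_i - \mathcal{I}_h r_i\|_K^2$; once the scalar bound is established for each component $r_i$, summing gives the second inequality. It therefore suffices to prove $\|w - \mathcal{I}_h w\|_K \le Ch^{k+1}\|w\|_{k+1,K}$.

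For the scalar estimate I would pass to the reference simplex $\hat K$ through the affine map $F_K(\hat x) = B_K\hat x + b_K$ with $F_K(\hat K) = K$, and set $\hat w := w\circ F_K$. The essential structural fact is that Lagrange nodal interpolation is affine invariant: the nodes of $K$ are the images under $F_K$ of the nodes of $\hat K$, so pulling back commutes with interpolation, $(\mathcal{I}^K_h w)\circ F_K = \mathcal{I}^{\hat K}_h\hat w$, and therefore so does the error, $(w - \mathcal{I}^K_h w)\circ F_K = \hat w - \mathcal{I}^{\hat K}_h\hat w$.

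On the reference element, consider the error operator $E := \mathrm{Id} - \mathcal{I}^{\hat K}_h$. Because $\mathcal{I}^{\hat K}_h$ interpolates at enough nodes to determine $\mathcal{P}^k(\hat K)$ uniquely, it reproduces polynomials of degree at most $k$, so $Ep = 0$ for every $p\in\mathcal{P}^k(\hat K)$. For $k\ge 1$ and $d\in\{2,3\}$ the Sobolev embedding $H^{k+1}(\hat K)\hookrightarrow C(\bar{\hat K})$ holds, whence the point-evaluation functionals $\hat w\mapsto\hat w(\hat\xi_j)$ are bounded on $H^{k+1}(\hat K)$ and $E$ is a bounded linear map from $H^{k+1}(\hat K)$ into $L^2(\hat K)$. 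The Bramble--Hilbert lemma then produces a constant $\hat C=\hat C(\hat K,k)$ with $\|\hat w - \mathcal{I}^{\hat K}_h\hat w\|_{L^2(\hat K)}\le\hat C\,|\hat w|_{k+1,\hat K}$. Mapping back with the standard affine change-of-variable estimates $\|v\circ F_K\|_{L^2(\hat K)} = |\det B_K|^{-1/2}\|v\|_K$ and $|\hat w|_{k+1,\hat K}\le C\|B_K\|^{k+1}|\det B_K|^{-1/2}|w|_{k+1,K}$, the Jacobian determinants cancel and $\|B_K\|\le Ch_K$ yields $\|w - \mathcal{I}^K_h w\|_K\le C h_K^{k+1}|w|_{k+1,K}\le Ch^{k+1}\|w\|_{k+1,K}$, with $C$ independent of $K$ (the uniform shape regularity of the mesh guarantees the bound is uniform across all elements).

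The step I expect to be the main obstacle is the boundedness of the point-evaluation functionals that underlies the Bramble--Hilbert application; this is exactly where the continuity hypothesis $w\in C(\bar K)$ does real work. For $k\ge1$ the embedding $H^{k+1}\hookrightarrow C$ supplies this bound automatically, but in the borderline low-order case ($k=0$ with $d=2$, where $H^1$ does not embed into $C$) point evaluation is not $H^1$-bounded, so one cannot simply invoke Bramble--Hilbert on all of $H^{k+1}$ and must instead argue on the subspace $C(\bar K)\cap H^{k+1}(K)$; this is precisely why both the continuity and the Sobolev regularity appear in the hypotheses. The remaining ingredients, namely affine invariance of Lagrange interpolation, polynomial reproduction, and the scaling inequalities, are entirely routine.
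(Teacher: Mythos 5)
Your main argument (componentwise reduction, affine pullback to the reference simplex, polynomial reproduction, Bramble--Hilbert, and scaling under shape regularity) is correct whenever $k+1>d/2$, i.e.\ for $k\ge 1$ in $d=2,3$. Note that the paper does not actually prove this lemma at all: it quotes it from standard finite element theory \cite{MR2373954}, and that reference establishes it by exactly the route you follow, so in this regime your proposal coincides with the intended justification.

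The genuine gap is in your final paragraph, on the borderline case $k=0$, $d\ge 2$. You assert that one can rescue the argument by ``arguing on the subspace $C(\bar K)\cap H^{k+1}(K)$,'' but no such argument can exist, because the inequality itself is false there with a constant independent of $w$. Point evaluation remains unbounded on $C(\bar K)\cap H^{1}(K)$ with respect to the $H^1$ norm: in $2$D, a truncated logarithmic spike supported in a small ball around the node $\xi$ gives continuous functions $w_\delta$ with $\|w_\delta\|_{1,K}\le\delta$ arbitrarily small yet $w_\delta(\xi)=1$. For such $w_\delta$ the $k=0$ interpolant is the constant $1$, so
\begin{align*}
\|w_\delta-\mathcal I_h w_\delta\|_K \;\ge\; |K|^{1/2}-\delta,
\qquad\text{while}\qquad
Ch\,\|w_\delta\|_{1,K}\;\le\; Ch\,\delta\;\to\;0,
\end{align*}
a contradiction for any finite $C$, even on a single fixed element. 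So the Bramble--Hilbert step cannot be repaired on the subspace; the estimate genuinely requires $k+1>d/2$, and the hypothesis ``$k\ge 0$'' in the lemma is too generous as stated --- a slight defect of the paper itself, inherited from quoting the textbook estimate outside its range of validity. This is harmless for the paper's conclusions, since for $k=0$ the Interpolatory HDG method coincides with the standard HDG method (the paper's Remark 1), and rates for smooth solutions can be recovered from $W^{1,\infty}$-based interpolation bounds instead; but your proof should either restrict to $k\ge 1$ or switch to such bounds in the low-order case rather than claim a Sobolev-space argument that cannot close.
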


\subsection{Main Results}

We can now state our main result for the Interpolatory HDG method.
\begin{theorem}\label{main_err_qu}
	For all $0<t\le T$, the solution $ (\bm q_h, u_h) $ of the Interpolatory HDG equations satisfy
	\begin{align*}
	\hspace{2em}&\hspace{-2em}\|\bm q(t) - \bm q_h(t)\|_{\mathcal T_h}^2\\
	&\le C  \|(\bm\Pi_V {\bm{q}} -\bm q)(0)\|_{\mathcal{T}_h}^2 + C \int_0^t \bigg(  \|F(-\bm q, u)- \mathcal I_h F(-\bm q, u) \|_{\mathcal T_h}^2 \\
	& \quad  + \|\Pi_Wu_t-u_t\|_{\mathcal T_h}^2  +\|\bm\Pi_V {\bm{q}}_t -\bm q_t\|_{\mathcal{T}_h}^2 + \|\bm{\Pi}_V \bm q  - \bm q\|_{\mathcal T_h}^2\\
	&
	\quad+ \|{\Pi}_W u  - u\|_{\mathcal T_h}^2 + \|\mathcal I_h \bm q  - \bm q\|_{\mathcal T_h}^2 + \|\mathcal I_hu  - u\|_{\mathcal T_h}^2 \bigg),\\
	\hspace{2em}&\hspace{-2em}\|u(t) - u_h(t)\|_{\mathcal T_h}^2\\
	&\le   C \int_0^t \bigg(  \|F(-\bm q, u)- \mathcal I_h F(-\bm q, u) \|_{\mathcal T_h}^2  + \|\Pi_Wu_t-u_t\|_{\mathcal T_h}^2+ \|\bm{\Pi}_V \bm q  - \bm q\|_{\mathcal T_h}^2\\
	& \quad + \|{\Pi}_W u  - u\|_{\mathcal T_h}^2 + \|\mathcal I_h \bm q  - \bm q\|_{\mathcal T_h}^2 + \|\mathcal I_hu  - u\|_{\mathcal T_h}^2 \bigg).
	\end{align*}
\end{theorem}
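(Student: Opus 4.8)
The plan is to run the standard projection-based HDG energy argument, treating the extra elementwise interpolation $\mathcal I_h$ in the nonlinear term as a separate perturbation. First I would split the errors through the auxiliary projections of Lemma~\ref{pro_error} and the trace projection $P_M$ of \eqref{standpm}: write $\bm q-\bm q_h=\bm\delta^q+\bm e^q$ and $u-u_h=\delta^u+e^u$, with $\bm\delta^q=\bm q-\bm\Pi_V\bm q$, $\delta^u=u-\Pi_W u$, $\bm e^q=\bm\Pi_V\bm q-\bm q_h$, $e^u=\Pi_W u-u_h$, and set $e^{\widehat u}=P_M u-\widehat u_h$. Subtracting the Interpolatory HDG equations \eqref{semi_discretion_standard_a}, \eqref{semi_discretion_group_b}, \eqref{semi_discretion_standard_c} from the exact mixed form \eqref{semilinear_pde_mixed} and invoking the defining identities \eqref{projection_operator_1}--\eqref{projection_operator_3} together with \eqref{standpm}, the constitutive error equation collapses to the clean form $(\bm e^q,\bm r)_{\mathcal T_h}-(e^u,\nabla\cdot\bm r)_{\mathcal T_h}+\langle e^{\widehat u},\bm r\cdot\bm n\rangle_{\partial\mathcal T_h}=0$, while the evolution equation retains only the time-derivative residual $(\Pi_W u_t-u_t,w)_{\mathcal T_h}$ and the nonlinear discrepancy $(F(-\bm q,u)-\mathcal I_h F(-\bm q_h,u_h),w)_{\mathcal T_h}$ as data.

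Next I would derive the energy identity. Testing the constitutive error equation with $\bm r=\bm e^q$, the evolution error equation with $w=e^u$, and the conservativity equation with $\mu=e^{\widehat u}$, then adding, the volume terms involving $\nabla\cdot\bm e^q$ cancel and the interface terms combine with the stabilization into $\langle\tau(e^u-e^{\widehat u}),e^u-e^{\widehat u}\rangle_{\partial\mathcal T_h}$, yielding $\tfrac12\tfrac{d}{dt}\|e^u\|_{\mathcal T_h}^2+\|\bm e^q\|_{\mathcal T_h}^2+\|\tau^{1/2}(e^u-e^{\widehat u})\|_{\partial\mathcal T_h}^2=(\text{residuals},e^u)_{\mathcal T_h}$. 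I would bound the residual by Young's inequality, integrate in time, absorb the resulting $\|\bm e^q\|_{\mathcal T_h}^2$ terms into the left-hand side, and apply Grönwall. Since the assumption $u_h^0=\Pi_W u_0$ forces $e^u(0)=0$, no initial term survives, and the triangle inequality $\|u-u_h\|_{\mathcal T_h}\le\|\delta^u\|_{\mathcal T_h}+\|e^u\|_{\mathcal T_h}$ with $\|\delta^u\|_{\mathcal T_h}=\|\Pi_W u-u\|_{\mathcal T_h}$ reproduces the stated bound for $\|u(t)-u_h(t)\|_{\mathcal T_h}^2$.

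The crux is the nonlinear discrepancy, which I would decompose as $F(-\bm q,u)-\mathcal I_h F(-\bm q_h,u_h)=[F(-\bm q,u)-\mathcal I_h F(-\bm q,u)]+\mathcal I_h[F(-\bm q,u)-F(-\bm q_h,u_h)]$. The first bracket is precisely the interpolation-consistency term $\|F(-\bm q,u)-\mathcal I_h F(-\bm q,u)\|_{\mathcal T_h}$ of the statement. For the second, I would establish the elementwise $L^2$-stability $\|\mathcal I_h g\|_K^2\lesssim h_K^d\sum_j|g(\xi_j^K)|^2$ (norm equivalence on the reference element plus a scaling/shape-regularity argument), then evaluate the Lipschitz bound \eqref{lip} at the nodes; using $\mathcal I_h\bm q_h=\bm q_h$ and $\mathcal I_h u_h=u_h$ to recognise $\bm q(\xi_j^K)-\bm q_h(\xi_j^K)$ and $u(\xi_j^K)-u_h(\xi_j^K)$ as nodal values of $\mathcal I_h\bm q-\bm q_h$ and $\mathcal I_h u-u_h$, the norm equivalence yields $\|\mathcal I_h[F(-\bm q,u)-F(-\bm q_h,u_h)]\|_{\mathcal T_h}\le CL(\|\mathcal I_h\bm q-\bm q_h\|_{\mathcal T_h}+\|\mathcal I_h u-u_h\|_{\mathcal T_h})$. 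Splitting $\mathcal I_h\bm q-\bm q_h=(\mathcal I_h\bm q-\bm q)+\bm\delta^q+\bm e^q$ and likewise for $u$ produces exactly the interpolation errors $\|\mathcal I_h\bm q-\bm q\|_{\mathcal T_h}$, $\|\mathcal I_h u-u\|_{\mathcal T_h}$, the projection errors $\|\bm\Pi_V\bm q-\bm q\|_{\mathcal T_h}$, $\|\Pi_W u-u\|_{\mathcal T_h}$, and the energy errors $\|\bm e^q\|_{\mathcal T_h}$, $\|e^u\|_{\mathcal T_h}$, all of which appear in the theorem.

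Finally, the pointwise-in-time flux estimate needs a second argument. I would differentiate the clean constitutive error equation in time and test with $\bm r=\bm e^q$ to obtain $\tfrac12\tfrac{d}{dt}\|\bm e^q\|_{\mathcal T_h}^2=(\partial_t e^u,\nabla\cdot\bm e^q)_{\mathcal T_h}-\langle\partial_t e^{\widehat u},\bm e^q\cdot\bm n\rangle_{\partial\mathcal T_h}$, eliminate the right-hand side using the evolution error equation, and integrate over $[0,t]$; this is where the residuals $\|\Pi_W u_t-u_t\|_{\mathcal T_h}$ and $\|\bm\Pi_V\bm q_t-\bm q_t\|_{\mathcal T_h}$, and the initial term $\|(\bm\Pi_V\bm q-\bm q)(0)\|_{\mathcal T_h}$ bounding $\|\bm e^q(0)\|_{\mathcal T_h}$, enter, after which $\|\bm q-\bm q_h\|_{\mathcal T_h}\le\|\bm\delta^q\|_{\mathcal T_h}+\|\bm e^q\|_{\mathcal T_h}$ gives the flux bound. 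I expect the main obstacle to be the interpolation term: rigorously establishing the mesh-independent $L^2$-stability of $\mathcal I_h$ and making sense of the nodal values of the exact $\bm q,u$ (which requires their elementwise continuity, i.e.\ the hypotheses of Lemma~\ref{lemmainter}), and then channelling the Lipschitz estimate through $\mathcal I_h\bm q-\bm q_h$ and $\mathcal I_h u-u_h$ rather than through $F$ directly---this routing is exactly what forces the interpolation-error terms into the final bound. A secondary difficulty is the bookkeeping of the interface terms in the flux argument so that they reduce to projection residuals rather than uncontrolled trace quantities.
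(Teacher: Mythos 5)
Your overall architecture mirrors the paper's proof (same projection-based error splitting, energy argument, nodal-norm equivalence, Lipschitz bound routed through $\mathcal I_h\bm q-\bm q_h$ and $\mathcal I_h u-u_h$, Gronwall, and a time-differentiated equation for the flux), and your two-term split of the nonlinearity is an equivalent minor variant of the paper's three-term split $R_1+R_2+R_3$. However, there is a genuine error in Step 1: the constitutive error equation does \emph{not} collapse to the clean form with zero right-hand side. The HDG projection $\bm\Pi_V$ of \eqref{HDG_projection_operator} is not the $L^2$-orthogonal projection onto $\bm V_h$; its volume orthogonality \eqref{projection_operator_1} holds only against $[\mathcal P_{k-1}(K)]^d$, whereas the test functions $\bm r$ range over $[\mathcal P_k(K)]^d$. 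Hence the residual $(\bm\Pi_V\bm q-\bm q,\bm r)_{\mathcal T_h}$ survives, and the correct error equation is the paper's \eqref{err_eq_a},
\begin{equation*}
(\varepsilon_h^{\bm q},\bm r)_{\mathcal T_h}-(\varepsilon_h^u,\nabla\cdot\bm r)_{\mathcal T_h}+\langle\varepsilon_h^{\widehat u},\bm r\cdot\bm n\rangle_{\partial\mathcal T_h} = (\bm\Pi_V\bm q-\bm q,\bm r)_{\mathcal T_h},
\end{equation*}
so the energy identity of Lemma \ref{energy_norm} carries the extra term $(\bm\Pi_V\bm q-\bm q,\varepsilon_h^{\bm q})_{\mathcal T_h}$, tested against the flux error rather than against $e^u$.

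This is not merely cosmetic: your own later steps are inconsistent with the "clean form." In the flux estimate you say that the residual $\|\bm\Pi_V\bm q_t-\bm q_t\|_{\mathcal T_h}$ and the initial term $\|(\bm\Pi_V\bm q-\bm q)(0)\|_{\mathcal T_h}$ "enter," but under your claimed equation they have no source: the time-differentiated constitutive error equation would also have zero right-hand side, and the bound $\|\varepsilon_h^{\bm q}(0)\|_{\mathcal T_h}\le\|(\bm\Pi_V\bm q-\bm q)(0)\|_{\mathcal T_h}$ is obtained in the paper (proof of Theorem \ref{error_ana_q}) precisely by evaluating the instantaneous energy identity at $t=0$, where the term $((\bm\Pi_V\bm q-\bm q)(0),\varepsilon_h^{\bm q}(0))_{\mathcal T_h}$ is the only surviving right-hand side once $\varepsilon_h^u(0)=0$. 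The fix is local and does not change your architecture: keep the residual in the error equation, absorb $(\bm\Pi_V\bm q-\bm q,\varepsilon_h^{\bm q})_{\mathcal T_h}$ by Young's inequality into $\tfrac14\|\varepsilon_h^{\bm q}\|_{\mathcal T_h}^2+\|\bm\Pi_V\bm q-\bm q\|_{\mathcal T_h}^2$ in the $u$-estimate (this is why $\|\bm\Pi_V\bm q-\bm q\|_{\mathcal T_h}^2$ appears in the second bound of the theorem), and keep its time derivative in the flux argument, which is exactly where $\|\bm\Pi_V\bm q_t-\bm q_t\|_{\mathcal T_h}^2$ and the initial term come from.
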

The error bounds for the standard HDG method are obtained when we replace the interpolation operator $ \mathcal{I}_h $ by the identity.
By Lemma \ref{pro_error},  Lemma \ref{lemmainter}, and Theorem \ref{main_err_qu}, we can easily obtain convergence rates for smooth solutions.

\begin{corollary}
	%	Suppose the solution of is smooth enough and the degree of polynomial is $k$,
	If $ u $, $ \bm q $, and $ F(-\bm q,u) $ are sufficiently smooth for $ t \in [0,T] $, then for all $0<t\le T$ the solution $ (\bm q_h, u_h) $ of the Interpolatory HDG equations satisfy
	\begin{align*}
	\|\bm q(t) - \bm q_h(t)\|_{\mathcal T_h}&\le C h^{k+1},\\
	\|u(t) - u_h(t)\|_{\mathcal T_h}&\le C h^{k+1}.
	\end{align*}
\end{corollary}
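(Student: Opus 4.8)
The plan is to feed the smoothness hypotheses through the abstract bounds of Theorem~\ref{main_err_qu}, controlling every residual appearing on their right-hand sides by the approximation estimates of Lemma~\ref{pro_error} and Lemma~\ref{lemmainter}, and then taking square roots. Throughout I take the stabilization function to satisfy $0 < \tau_0 \le \tau \le \tau_1$ on $\partial\mathcal T_h$ for constants $\tau_0, \tau_1$ independent of $h$, which is the standard HDG choice; this guarantees that the quantities $\tau_K^{*}$ and $1/\tau_K^{\max}$ appearing in Lemma~\ref{pro_error} are $O(1)$, so the projection errors do not lose a power of $h$.

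First I would dispose of the projection residuals. With $\ell_{\bm q} = \ell_{u} = k$, Lemma~\ref{pro_error} gives, element by element,
\begin{align*}
\|\bm\Pi_V\bm q - \bm q\|_K + \|\Pi_W u - u\|_K \le C h_K^{k+1}\big( |\bm q|_{H^{k+1}(K)} + |u|_{H^{k+1}(K)} + |\nabla\cdot\bm q|_{H^{k}(K)} \big),
\end{align*}
where the $\tau_K^{*}$ and $1/\tau_K^{\max}$ factors have been absorbed into $C$ using the choice of $\tau$ above. Identical bounds hold for the time-differentiated quantities $\bm\Pi_V\bm q_t - \bm q_t$ and $\Pi_W u_t - u_t$ after replacing $\bm q, u$ by $\bm q_t, u_t$. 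Squaring, summing over $K \in \mathcal T_h$, and using $h_K \le h$ converts each such term into $C h^{2(k+1)}$ times a Sobolev seminorm of the exact solution over $\Omega$; the initial-data term $\|(\bm\Pi_V\bm q - \bm q)(0)\|_{\mathcal T_h}^2$ is handled the same way evaluated at $t = 0$.

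Next come the interpolation residuals. Since $u$ and $\bm q$ are assumed smooth, they lie in $C(\bar K) \cap H^{k+1}(K)$ elementwise, so Lemma~\ref{lemmainter} yields $\|\mathcal I_h u - u\|_K \le C h^{k+1}\|u\|_{k+1,K}$ and $\|\mathcal I_h \bm q - \bm q\|_K \le C h^{k+1}\|\bm q\|_{k+1,K}$. The one genuinely new ingredient of the Interpolatory method is the nonlinear interpolation residual $\|F(-\bm q, u) - \mathcal I_h F(-\bm q, u)\|_{\mathcal T_h}$: here I apply Lemma~\ref{lemmainter} to the composed function $g := F(-\bm q, u)$, which is legitimate precisely because we have assumed $F(-\bm q, u)$ is sufficiently smooth, so that $g \in C(\bar K) \cap H^{k+1}(K)$ elementwise, giving $\|g - \mathcal I_h g\|_{\mathcal T_h} \le C h^{k+1}\|g\|_{k+1,\Omega}$. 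This is the step to watch, since smoothness of $u$ and $\bm q$ alone does not force $F(-\bm q, u)$ to be smooth as a function of the spatial variable; that is why it is listed as a separate hypothesis, and it is the only place in the argument where the interpolation of the nonlinearity (rather than of the solution) enters.

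Finally I assemble the pieces. Every integrand on the right-hand side of Theorem~\ref{main_err_qu} has now been bounded by $C h^{2(k+1)}$ uniformly for $t \in [0,T]$, with the constant depending on the (finite) suprema over $[0,T]$ of the relevant Sobolev norms of $u$, $\bm q$, and $F(-\bm q, u)$. Integrating over $[0,t] \subseteq [0,T]$ contributes only a factor $T$, so both right-hand sides are bounded by $C h^{2(k+1)}$. Taking square roots gives $\|\bm q(t) - \bm q_h(t)\|_{\mathcal T_h} \le C h^{k+1}$ and $\|u(t) - u_h(t)\|_{\mathcal T_h} \le C h^{k+1}$, as claimed. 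The analogous rates for standard HDG follow verbatim, using only that replacing $\mathcal I_h$ by the identity sends the nonlinear interpolation residual to zero.
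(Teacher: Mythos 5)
Your proposal is correct and takes essentially the same route as the paper, which derives the corollary directly from Theorem~\ref{main_err_qu} by bounding every term on its right-hand side via the projection estimates of Lemma~\ref{pro_error} (applied also to $\bm q_t$, $u_t$) and the interpolation estimates of Lemma~\ref{lemmainter} (applied in particular to the composed function $F(-\bm q,u)$, whose smoothness is indeed the separate hypothesis you flag). Your explicit uniform bounds on $\tau$, which keep $\tau_K^{*}$ and $1/\tau_K^{\max}$ of order one, merely make precise a dependence the paper leaves implicit.
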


%The proof for standard HDG is similar and also less involved, and is omitted.  We indicate a difference in the proof for the standard case in \Cref{sec:energy_argument_u}.

%%
%\begin{theorem}
%	For all $0<t\le T$, the solution $ (\bm q_h, u_h) $ of the standard HDG equations satisfy
%	\begin{align*}
%	%
%	\hspace{2em}&\hspace{-2em}\|\bm q(t) - \bm q_h(t)\|_{\mathcal T_h}^2 \le  C\|(\bm\Pi_V %{\bm{q}} -\bm q)(0)\|_{\mathcal{T}_h}^2 \\
%
%	&+ C \int_0^t \bigg(  \|\Pi_Wu_t-u_t\|_{\mathcal T_h}^2+\|\bm\Pi_V {\bm{q}}_t -\bm q_t\|_{\mathcal{T}_h}^2 + \|\bm{\Pi}_V \bm q  - \bm q\|_{\mathcal T_h}^2+ \|{\Pi}_W u  - u\|_{\mathcal T_h}^2 \bigg),\\
%
%	\hspace{2em}&\hspace{-2em}\|u(t) - u_h(t)\|_{\mathcal T_h}^2\le  C \int_0^t \bigg(  \|\Pi_Wu_t-u_t\|_{\mathcal T_h}^2 + \|\bm{\Pi}_V \bm q  - \bm q\|_{\mathcal T_h}^2+ \|{\Pi}_W u  - u\|_{\mathcal T_h}^2 \bigg).
%	\end{align*}
%\end{theorem}
%By \Cref{pro_error},  \Cref{lemmainter}, and \Cref{main_err_qu}, we have  the following convergence rates:
%\begin{corollary}
%	%	Suppose the solution of is smooth enough and the degree of polynomial is $k$,
%	If $ u $ and $ \bm q $ are sufficiently smooth for $ t \in [0,T] $, then for all $0<t\le T$ the solution %$ (\bm q_h, u_h) $ of the standard HDG equations satisfy
%	\begin{align*}
%	\|\bm q(t) - \bm q_h(t)\|_{\mathcal T_h}&\le C h^{k+1},\\
%	%
%	\|u(t) - u_h(t)\|_{\mathcal T_h}&\le C h^{k+1}.
%	\end{align*}
%\end{corollary}

\section{Proof of the error estimates}
Here, we prove the main result for the Interpolatory HDG method, Theorem 4.1.  Our proof relies on techniques used in \cite{ChabaudCockburn12,MR3403707,MR1068202}.  We proceed in several steps.

\subsection{Step 1: Equations for the Projection of the Errors}
\begin{lemma}\label{error_u}
	For $\varepsilon_h^{\bm q} = \bm\Pi_V\bm q-\bm q_h $, $ \varepsilon_h^u=\Pi_Wu-u_h $, and $ \varepsilon_h^{\widehat u}=P_Mu-\widehat u_h$, we have
	\begin{subequations}\label{err_eq}
		\begin{align}
		(\varepsilon_h^{\bm q},\bm r)_{\mathcal T_h}-(\varepsilon_h^u,\nabla\cdot\bm r)_{\mathcal T_h}+\langle\varepsilon_h^{\widehat u},\bm r\cdot\bm n\rangle_{\partial\mathcal T_h} &= (\bm\Pi_V\bm q-\bm q,\bm r)_{\mathcal T_h},\label{err_eq_a}\\
		(\partial_t\varepsilon_h^u,w)_{\mathcal T_h}-(\varepsilon_h^{\bm q} ,\nabla w)_{\mathcal{T}_h}+\langle{\varepsilon}_h^{\widehat {\bm q}}\cdot\bm n,w\rangle_{\partial{\mathcal{T}_h}} \quad &\nonumber\\
		+ (F(-\bm q,u)-\mathcal I_hF(-\bm q_h, u_h),w)_{\mathcal T_h} &=(\Pi_Wu_t-u_t,w)_{\mathcal T_h},\label{err_eq_b}\\
		\langle{\varepsilon}_h^{\widehat {\bm q}}\cdot \bm{n} ,\mu\rangle_{\partial{\mathcal{T}_h}\backslash \varepsilon_h^{\partial}} &= 0,\label{err_eq_c}\\
		\varepsilon_h^u|_{t=0}&=0,\label{err_eq_d}
		\end{align}
	\end{subequations}
	for all $(\bm r,w,\mu)\in \bm V_h\times W_h\times M_h$, where
	\begin{align}\label{numerical_flux}
	{\varepsilon}_h^{\widehat {\bm q}} \cdot\bm n =\varepsilon_h^{\bm q}\cdot \bm n +\tau(\varepsilon_h^u-\varepsilon_h^{\widehat u}) \quad \text{on}\;\partial \mathcal T_h.
	\end{align}
\end{lemma}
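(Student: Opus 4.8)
The plan is to follow the standard projection-based HDG error analysis: exploit the fact that the exact solution satisfies the semidiscrete scheme with its own traces, subtract the Interpolatory HDG equations \eqref{semi_discretion_standard_a}, \eqref{semi_discretion_group_b}, \eqref{semi_discretion_standard_c}, and then rewrite every difference as a projection error plus a projected error. Concretely, since $\bm q = -\nabla u$, integration by parts on each $K$ gives $(\bm q,\bm r)_{\mathcal T_h}-(u,\nabla\cdot\bm r)_{\mathcal T_h}+\langle u,\bm r\cdot\bm n\rangle_{\partial\mathcal T_h}=0$, and because $u$ is single-valued across faces and $\bm q\cdot\bm n$ is continuous, the exact solution also satisfies $(\partial_t u,w)_{\mathcal T_h}-(\bm q,\nabla w)_{\mathcal T_h}+\langle\bm q\cdot\bm n,w\rangle_{\partial\mathcal T_h}+(F(-\bm q,u),w)_{\mathcal T_h}=(f,w)_{\mathcal T_h}$ together with $\langle\bm q\cdot\bm n,\mu\rangle_{\partial\mathcal T_h\setminus\varepsilon_h^\partial}=0$. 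Subtracting the discrete equations and splitting $\bm q-\bm q_h=(\bm q-\bm\Pi_V\bm q)+\varepsilon_h^{\bm q}$ and $u-u_h=(u-\Pi_W u)+\varepsilon_h^u$, I would collect the $\varepsilon_h$-terms on the left and the projection errors on the right.

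For \eqref{err_eq_a}, after subtracting \eqref{semi_discretion_standard_a} the volume term $(u-\Pi_W u,\nabla\cdot\bm r)_{\mathcal T_h}$ vanishes because $\nabla\cdot\bm r\in\mathcal P_{k-1}(K)$ so \eqref{projection_operator_2} applies, and the face term $\langle u-P_M u,\bm r\cdot\bm n\rangle_{\partial\mathcal T_h}$ vanishes because $\bm r\cdot\bm n|_e\in\mathcal P_k(e)$ and $P_M$ is the $L^2$-projection \eqref{standpm}; what remains is exactly $(\bm\Pi_V\bm q-\bm q,\bm r)_{\mathcal T_h}$. The volume term $(\varepsilon_h^{\bm q},\nabla w)$ in \eqref{err_eq_b} is handled identically: the part $(\bm q-\bm\Pi_V\bm q,\nabla w)_{\mathcal T_h}$ drops by \eqref{projection_operator_1} since $\nabla w\in[\mathcal P_{k-1}(K)]^d$, while the time-derivative term produces the right-hand side $(\Pi_W u_t-u_t,w)_{\mathcal T_h}$ after moving $(u_t-\Pi_W u_t,w)$ across. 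The nonlinear term needs no manipulation: subtracting $(\mathcal I_h F(-\bm q_h,u_h),w)$ from $(F(-\bm q,u),w)$ gives precisely the term shown in \eqref{err_eq_b}.

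The delicate step, which I expect to be the main obstacle, is reconciling the numerical flux traces in \eqref{err_eq_b} and \eqref{err_eq_c}. Writing $\widehat{\bm q}_h\cdot\bm n=\bm q_h\cdot\bm n+\tau(u_h-\widehat u_h)$ and comparing with the definition \eqref{numerical_flux} of $\varepsilon_h^{\widehat{\bm q}}\cdot\bm n$, a direct computation shows the exact-minus-discrete flux trace equals $\varepsilon_h^{\widehat{\bm q}}\cdot\bm n+(\bm q-\bm\Pi_V\bm q)\cdot\bm n+\tau(P_M u-\Pi_W u)$. Here the HDG projection is essential: property \eqref{projection_operator_3} converts $\langle(\bm q-\bm\Pi_V\bm q)\cdot\bm n,\mu\rangle_e$ into $\langle\tau(\Pi_W u-u),\mu\rangle_e$ for any $\mu\in\mathcal P_k(e)$, cancelling the $\tau\Pi_W u$ contribution and leaving only $\langle\tau(P_M u-u),\cdot\rangle$. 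Taking $\mu=w$ (resp.\ $\mu\in M_h$) and using that $\tau$ is constant on each face so that $\tau w|_e,\tau\mu|_e\in\mathcal P_k(e)=M_h|_e$, the orthogonality \eqref{standpm} of $P_M$ annihilates this last term; on boundary faces it vanishes since $u=0=P_M u$ there. This simultaneously yields the face term in \eqref{err_eq_b} and the conservation identity \eqref{err_eq_c}, the latter additionally using that both $\bm q\cdot\bm n$ and $\widehat{\bm q}_h\cdot\bm n$ are conservative, so their difference tested against $\mu$ over $\partial\mathcal T_h\setminus\varepsilon_h^\partial$ vanishes.

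Finally, \eqref{err_eq_d} is immediate from the assumption $P=\Pi_W$: at $t=0$ we have $\varepsilon_h^u=\Pi_W u_0-u_h(0)=\Pi_W u_0-Pu_0=0$. The argument is otherwise routine bookkeeping once the flux-trace identity is in hand; the only genuine care is in ensuring the $\langle\tau(P_M u-u),\cdot\rangle$ terms vanish, which hinges on the compatibility of $P_M$ with the per-face structure of $\tau$.
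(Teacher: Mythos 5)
Your proposal is correct and takes essentially the same route as the paper's proof: both arguments rest on the elementwise consistency of the exact solution, the orthogonality properties \eqref{projection_operator_1}--\eqref{projection_operator_3} of the HDG projection $(\bm\Pi_V,\Pi_W)$, the $L^2$-orthogonality \eqref{standpm} of $P_M$ together with the facewise-constant $\tau$, and the conservativity of both $\bm q\cdot\bm n$ and $\widehat{\bm q}_h\cdot\bm n$ for \eqref{err_eq_c}. The only difference is bookkeeping — you subtract the discrete equations first and then split each error into a projected error plus a projection error, whereas the paper first rewrites the exact-solution equations in projected form and then subtracts — and these are equivalent.
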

\begin{proof}
	Let us begin by noting that the exact solution $(\bm q,u)$ satisfies
	\begin{align*}
	(\bm{q},\bm{r})_{\mathcal{T}_h}-(u,\nabla\cdot \bm{r})_{\mathcal{T}_h}+\langle u,\bm{r}\cdot\bm n \rangle_{\partial{\mathcal{T}_h}} &= 0, \\
	(u_t,w)_{\mathcal T_h}-(\bm{q},\nabla w)_{\mathcal{T}_h}+\langle{\bm{q}}\cdot \bm{n},w\rangle_{\partial{\mathcal{T}_h}} + (F(-\bm q, u),w)_{\mathcal T_h} &= (f,w)_{\mathcal{T}_h},
	\end{align*}
	for all $\bm r\in\bm V_h$ and $w\in W_h$.  Since $P_M$ is the $L^2$-projection into $M_h$, it satisfies the orthogonality property
	\begin{align}\label{PM_projection}
	\langle \tau(P_Mu - u),\mu\rangle_{\partial \mathcal T_h} = 0 \quad \text{for all} \;\mu\in M_h
	\end{align}
	because $\tau$ is piecewise constant on $\partial \mathcal T_h$.  By this orthogonality property and the orthogonality properties \eqref{projection_operator_1} and \eqref{projection_operator_2} of $\bm\Pi_V$ and $\Pi_W$, respectively, we have
	\begin{align*}
	(\bm\Pi_V\bm{q},\bm{r})_{\mathcal{T}_h}-(\Pi_W u,\nabla\cdot \bm{r})_{\mathcal{T}_h}+\langle P_M u,\bm r\cdot \bm n\rangle_{\partial{\mathcal{T}_h}} &= (\bm{\Pi}_V\bm q-\bm q,\bm r)_{\mathcal T_h}, \\
	(\Pi_Wu_t,w)_{\mathcal T_h}-(\bm\Pi_V\bm{q},\nabla w)_{\mathcal{T}_h}+(F(-\bm q, u),w)_{\mathcal T_h} \quad &\\
	+\langle \bm\Pi_V {\bm{q}}\cdot \bm{n}      
	+\tau (\Pi_W u-P_Mu),w\rangle_{\partial{\mathcal{T}_h}}& =(\Pi_Wu_t-u_t + f,w)_{\mathcal T_h},
	\end{align*}
	for all $\bm r\in\bm V_h$ and $w\in W_h$. Subtracting the first two equations defining the HDG method, \eqref{semi_discretion_standard_a}  and \eqref{semi_discretion_standard_b}, from the above two equations, respectively, we readily obtain \eqref{err_eq_a} and \eqref{err_eq_b}.
	
	To prove  \eqref{err_eq_c} we proceed as follows. By the definition of $\varepsilon_h^{\widehat {\bm q}}$ in \eqref{numerical_flux},
	\begin{align*}
	\langle \varepsilon_h^{\widehat {\bm q}} \cdot \bm n,\mu \rangle_{\partial \mathcal T_h\backslash\varepsilon_h^{\partial}} = \langle (\bm \Pi_V \bm q - \bm q_h)\cdot\bm n + \tau (\Pi_W u - u_h - P_M u + \widehat u_h),\mu  \rangle_{\partial\mathcal T_h\backslash\varepsilon_h^{\partial}}.
	\end{align*}
	Hence, by the orthogonality property \eqref{projection_operator_3} of the projection $\bm\Pi_V$, $\Pi_W$ and property \eqref{PM_projection} of the projection $P_M$, we obtain
	\begin{align*}
	\langle \varepsilon_h^{\widehat {\bm q}} \cdot \bm n,\mu \rangle_{\partial \mathcal T_h\backslash\varepsilon_h^{\partial}} &= \langle ( \bm q - \bm q_h)\cdot\bm n + \tau ( u - u_h -  u + \widehat u_h),\mu  \rangle_{\partial\mathcal T_h\backslash\varepsilon_h^{\partial}}\\
	&= \langle  \bm q \cdot\bm n,\mu  \rangle_{\partial\mathcal T_h\backslash\varepsilon_h^{\partial}} -  \langle  \widehat {\bm q}_h \cdot\bm n,\mu  \rangle_{\partial\mathcal T_h\backslash\varepsilon_h^{\partial}},
	\end{align*}
	and equation \eqref{err_eq_c} follows since both of the above terms are zero. Indeed, the first is equal to zero because $\bm q$ is in $H(\text{div},\Omega)$ and the second because the normal component of $\bm q_h$ is single-valued by equation \eqref{semi_discretion_standard_c} defining the HDG method.
	
	It remains to prove equation \eqref{err_eq_d}. By equation \eqref{semi_discretion_standard_d} defining the HDG method, $u_h|_{t=0} = P u_0 = \Pi_W u_0$, and so
	\begin{align*}
	\varepsilon_h^u|_{t=0} = \Pi_W u_0 - u_h|_{t=0} = \Pi_W u_0 -\Pi_W u_0=0.
	\end{align*}
	This completes the proof.
\end{proof}

\subsection{Step 2: Estimate of $\varepsilon_h^u$ in $L^{\infty}(L^2)$ by an Energy Argument}
\label{sec:energy_argument_u}

\begin{lemma}\label{energy_norm}
	For any $t>0$, we have 
	\begin{align*}
	\hspace{1em}&\hspace{-1em}\frac 1 2 \|\varepsilon_h^u(t)\|_{\mathcal T_h}^2 +\int_0^t (\|\varepsilon_h^{\bm q}\|_{\mathcal T_h}^2 +\|\sqrt{\tau}(\varepsilon_h^u-\varepsilon_h^{\widehat u})\|_{\partial{\mathcal{T}_h}}^2) \\
	& =\int_0^t  (\bm\Pi_V {\bm{q}} -\bm q, \varepsilon_h^{\bm q})_{\mathcal{T}_h} 
	+(\Pi_Wu_t-u_t,\varepsilon_h^u)_{\mathcal T_h} - (F(-\bm q, u) - \mathcal I_h F(-\bm q_h, u_h),\varepsilon_h^u)_{\mathcal T_h}.
	\end{align*}
\end{lemma}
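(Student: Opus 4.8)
The plan is to run the standard HDG energy argument on the projection-of-error system \eqref{err_eq}. Specifically, I would test the three error equations with the errors themselves: take $\bm r = \varepsilon_h^{\bm q}$ in \eqref{err_eq_a}, $w = \varepsilon_h^u$ in \eqref{err_eq_b}, and $\mu = \varepsilon_h^{\widehat u}$ in \eqref{err_eq_c}, then add the first two and subtract the third. The left-hand side immediately produces $\|\varepsilon_h^{\bm q}\|_{\mathcal T_h}^2$, the time term $(\partial_t\varepsilon_h^u,\varepsilon_h^u)_{\mathcal T_h}$, and a collection of volume and face coupling terms; the goal is to show these couplings collapse to the single positive stabilization term $\|\sqrt{\tau}(\varepsilon_h^u-\varepsilon_h^{\widehat u})\|_{\partial\mathcal T_h}^2$.

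The volume terms are handled by elementwise integration by parts: since $(\nabla\cdot\varepsilon_h^{\bm q},\varepsilon_h^u)_{\mathcal T_h} + (\varepsilon_h^{\bm q},\nabla\varepsilon_h^u)_{\mathcal T_h} = \langle \varepsilon_h^{\bm q}\cdot\bm n,\varepsilon_h^u\rangle_{\partial\mathcal T_h}$, the contributions $-(\varepsilon_h^u,\nabla\cdot\varepsilon_h^{\bm q})_{\mathcal T_h}$ from \eqref{err_eq_a} and $-(\varepsilon_h^{\bm q},\nabla\varepsilon_h^u)_{\mathcal T_h}$ from \eqref{err_eq_b} combine into $-\langle \varepsilon_h^{\bm q}\cdot\bm n,\varepsilon_h^u\rangle_{\partial\mathcal T_h}$, leaving only face integrals to reckon with.

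The delicate part, and the step I would watch most carefully, is the bookkeeping of these face integrals. First I would use that $\varepsilon_h^{\widehat u}\in M_h$ vanishes on $\varepsilon_h^{\partial}$ to promote the tested version of \eqref{err_eq_c} to $\langle \varepsilon_h^{\widehat{\bm q}}\cdot\bm n,\varepsilon_h^{\widehat u}\rangle_{\partial\mathcal T_h}=0$ over the full skeleton. Collecting the surviving face terms $\langle \varepsilon_h^{\widehat u},\varepsilon_h^{\bm q}\cdot\bm n\rangle_{\partial\mathcal T_h}$, $-\langle \varepsilon_h^{\bm q}\cdot\bm n,\varepsilon_h^u\rangle_{\partial\mathcal T_h}$, and $\langle \varepsilon_h^{\widehat{\bm q}}\cdot\bm n,\varepsilon_h^u\rangle_{\partial\mathcal T_h}$, and subtracting the vanishing quantity $\langle \varepsilon_h^{\widehat{\bm q}}\cdot\bm n,\varepsilon_h^{\widehat u}\rangle_{\partial\mathcal T_h}$, I would regroup them as $\langle \varepsilon_h^u-\varepsilon_h^{\widehat u},\,\varepsilon_h^{\widehat{\bm q}}\cdot\bm n-\varepsilon_h^{\bm q}\cdot\bm n\rangle_{\partial\mathcal T_h}$. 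Invoking the definition \eqref{numerical_flux}, namely $\varepsilon_h^{\widehat{\bm q}}\cdot\bm n-\varepsilon_h^{\bm q}\cdot\bm n=\tau(\varepsilon_h^u-\varepsilon_h^{\widehat u})$, this is precisely $\|\sqrt{\tau}(\varepsilon_h^u-\varepsilon_h^{\widehat u})\|_{\partial\mathcal T_h}^2$, as wanted.

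At this stage the running identity reads $\tfrac12\frac{d}{dt}\|\varepsilon_h^u\|_{\mathcal T_h}^2 + \|\varepsilon_h^{\bm q}\|_{\mathcal T_h}^2 + \|\sqrt{\tau}(\varepsilon_h^u-\varepsilon_h^{\widehat u})\|_{\partial\mathcal T_h}^2$ equals the three data terms on the right, with the nonlinear contribution $-(F(-\bm q,u)-\mathcal I_hF(-\bm q_h,u_h),\varepsilon_h^u)_{\mathcal T_h}$ carried along untouched, since it is treated only in later steps. Recognizing $(\partial_t\varepsilon_h^u,\varepsilon_h^u)_{\mathcal T_h}=\tfrac12\frac{d}{dt}\|\varepsilon_h^u\|_{\mathcal T_h}^2$, integrating in time from $0$ to $t$, and using the initial condition $\varepsilon_h^u|_{t=0}=0$ from \eqref{err_eq_d} to discard the boundary-in-time term then gives the claimed identity. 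I expect no genuine obstacle beyond the sign and pairing bookkeeping in the face terms; because this is a clean energy identity rather than an inequality, no Cauchy--Schwarz or Gr\"onwall estimate is needed at this step.
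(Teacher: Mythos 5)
Your proposal is correct and follows essentially the same route as the paper's proof: test \eqref{err_eq_a}, \eqref{err_eq_b}, \eqref{err_eq_c} with $\varepsilon_h^{\bm q}$, $\varepsilon_h^u$, and $\pm\varepsilon_h^{\widehat u}$, use elementwise integration by parts on the volume terms, collapse the face terms via \eqref{numerical_flux} into $\|\sqrt{\tau}(\varepsilon_h^u-\varepsilon_h^{\widehat u})\|_{\partial\mathcal T_h}^2$, and integrate in time using \eqref{err_eq_d}. Your explicit regrouping of the face integrals as $\langle \varepsilon_h^u-\varepsilon_h^{\widehat u},\,\varepsilon_h^{\widehat{\bm q}}\cdot\bm n-\varepsilon_h^{\bm q}\cdot\bm n\rangle_{\partial\mathcal T_h}$ and the extension of the tested trace equation to the full skeleton (since $\varepsilon_h^{\widehat u}$ vanishes on $\varepsilon_h^{\partial}$) are exactly the bookkeeping the paper performs in computing its term $\Theta$.
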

\begin{proof}
	Taking $\bm{r} = \varepsilon_h^{\bm q}$ in \eqref{err_eq_a}, $w = \varepsilon_h^u$ in \eqref{err_eq_b}, $\mu= -\varepsilon_h^{\widehat u}$ in \eqref{err_eq_c}, adding the resulting three equations, and noting that $\mu = 0$ on $\varepsilon_h^{\partial}$, we obtain
	\begin{align*}
	\frac 1 2 \frac {d}{dt} \|\varepsilon_h^u\|_{\mathcal T_h} + \|\varepsilon_h^{\bm q}\|_{\mathcal T_h}^2 +\Theta  &= (\bm\Pi_V {\bm{q}} -\bm q, \varepsilon_h^{\bm q})_{\mathcal{T}_h} 
	+(\Pi_Wu_t-u_t,\varepsilon_h^u)_{\mathcal T_h}\\
	& \quad - (F(-\bm q, u) - \mathcal I_h F(-\bm q_h, u_h),\varepsilon_h^u)_{\mathcal T_h},
	\end{align*}
	where 
	\begin{align*}
	\Theta &= - (\varepsilon_h^u, \nabla\cdot \varepsilon_h^{\bm q})_{\mathcal T_h} +\langle\varepsilon_h^{\widehat u},\varepsilon_h^{\bm q}\cdot \bm n\rangle_{\partial\mathcal T_h}- (\varepsilon_h^{\bm q} ,\nabla \varepsilon_h^u)_{\mathcal{T}_h} \\
	& \quad + \langle\varepsilon_h^{\bm q}\cdot \bm{n} +\tau (\varepsilon_h^u-\varepsilon_h^{\widehat u}),\varepsilon_h^u \rangle_{\partial {\mathcal{T}_h}}-\langle\varepsilon_h^{\bm q}\cdot \bm{n} +\tau (\varepsilon_h^u-\varepsilon_h^{\widehat u}),\varepsilon_h^{\widehat u} \rangle_{\partial{\mathcal{T}_h}}\\
	&= \langle \tau (\varepsilon_h^u-\varepsilon_h^{\widehat u}),\varepsilon_h^u-\varepsilon_h^{\widehat u}\rangle_{\partial{\mathcal{T}_h}}.
	\end{align*}
	Here, we used the definition of $\varepsilon_h^{\widehat{\bm q}}$ in \eqref{numerical_flux} and integrated by parts. The desired identity follows after integrating in time over the interval $(0,t)$ and using the fact that $\varepsilon_h^u(0) = 0$ by \eqref{err_eq_d}.
\end{proof}

\subsection{Step 3: Norms associated to the interpolation operator $\mathcal{I}_h$}

To estimate the error in the nonlinear term in the Interpolatory HDG method, we utilize the following auxiliary norms on $ W_h $ and $ \bm V_h $:
\begin{align}\label{def_e_norm}
\|w\|_h = \left[\sum_{K\in\mathcal T_h}\sum_{i=1}^{\ell_K} |w(\xi^K_i)|^2 h^d_K\right]^{1/2},  ~  \|\bm r\|_h = \left[\sum_{K\in\mathcal T_h}\sum_{i=1}^{\ell_K} \sum_{s=1}^d |r_s(\xi^K_i)|^2 h^d_K\right]^{1/2},
\end{align}
for any $w\in W_h$ and $\bm r\in \bm V_h$, where $r_s$ is the $s$-th component of $\bm r$ and $ \{ \xi_i^K \} $ are the FE nodes as in Section \ref{sec:GHDG_semidiscrete}.  Here, $h_K$ denotes the diameter of the element $K$. These norms are very similar to the auxiliary norms on continuous FE spaces introduced in \cite{MR3403707,MR1068202}.  
The following lemma is fundamental for our analysis.
\begin{lemma}\label{lemma_eq_norm}
	There exist two positive constants $c_1$ and $c_2$ independent of $h$ such that 
	\begin{align}\label{eq_norm}
	c_1 \|w\|_h \le \|w\|_{\mathcal T_h} \le c_2 \|v\|_h,\\
	c_1 \|\bm r\|_h \le \|\bm r\|_{\mathcal T_h} \le c_2 \|\bm r\|_h,
	\end{align}
	for all $w\in W_h$ and $\bm r\in \bm V_h$.
\end{lemma}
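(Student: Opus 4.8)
The plan is to reduce the global equivalence to an element-by-element estimate and to prove that estimate by a standard scaling (reference-element) argument combined with the equivalence of all norms on a finite-dimensional space. Since both $\|\cdot\|_h$ and $\|\cdot\|_{\mathcal T_h}$ are sums of squared elementwise contributions, it suffices to show that for each $K\in\mathcal T_h$ there are constants $c_1,c_2>0$, independent of $K$ and $h$, with
\[
c_1^2\, h_K^d \sum_{i=1}^{\ell_K} |w(\xi_i^K)|^2 \le \|w\|_{L^2(K)}^2 \le c_2^2\, h_K^d \sum_{i=1}^{\ell_K} |w(\xi_i^K)|^2
\]
for all $w\in\mathcal P^k(K)$; summing over $K$ then yields the first line of \eqref{eq_norm}, and the vector case follows by applying the scalar estimate componentwise to each $r_s$.

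First I would fix the reference simplex $\hat K$ and the affine map $F_K:\hat K\to K$, and set $\hat w = w\circ F_K \in \mathcal P^k(\hat K)$. Writing $\hat\xi_i = F_K^{-1}(\xi_i^K)$ for the fixed reference nodal points, the change of variables gives $\|w\|_{L^2(K)}^2 = |\det DF_K|\,\|\hat w\|_{L^2(\hat K)}^2$ and $w(\xi_i^K)=\hat w(\hat\xi_i)$, so the entire estimate reduces to the single space $\mathcal P^k(\hat K)$. The key observation is that the map $\hat w \mapsto (\hat w(\hat\xi_1),\dots,\hat w(\hat\xi_{\ell_K}))$ is a linear isomorphism of $\mathcal P^k(\hat K)$ onto $\mathbb R^{\ell_K}$ --- this is exactly the unisolvence encoded in the nodal basis property $\phi_j^K(\xi_i^K)=\delta_{ij}$ --- so $\big(\sum_i |\hat w(\hat\xi_i)|^2\big)^{1/2}$ is a genuine norm on the finite-dimensional space $\mathcal P^k(\hat K)$. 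Since $\|\cdot\|_{L^2(\hat K)}$ is also a norm on that space, the equivalence of all norms in finite dimensions furnishes constants $\hat c_1,\hat c_2>0$, depending only on $k$ and $\hat K$, with $\hat c_1^2 \sum_i |\hat w(\hat\xi_i)|^2 \le \|\hat w\|_{L^2(\hat K)}^2 \le \hat c_2^2 \sum_i |\hat w(\hat\xi_i)|^2$.

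To conclude I would transfer these bounds back to $K$. The remaining ingredient is that $|\det DF_K|\simeq h_K^d$: one always has $|K|\le C h_K^d$, while the uniform shape-regularity assumption gives the reverse bound $|K|\ge c\,h_K^d$, so $|\det DF_K| = |K|/|\hat K|$ is comparable to $h_K^d$ with constants depending only on the shape-regularity parameter. Combining this with the reference-element equivalence produces the elementwise estimate above with $c_1,c_2$ independent of $K$ and $h$, and summing over all $K\in\mathcal T_h$ completes the proof.

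The main obstacle --- really the only place any care is needed --- is securing the uniformity of $c_1,c_2$ in both $K$ and $h$. This rests on two facts: that every simplex is affine-equivalent to the \emph{same} reference element, so the finite-dimensional constants $\hat c_1,\hat c_2$ are genuinely fixed; and that uniform shape-regularity pins the Jacobian scaling to $h_K^d$ from both sides. Without shape-regularity the lower bound $|K|\ge c\,h_K^d$ can fail and the constant $c_1$ would degenerate, which is precisely why that assumption is invoked here.
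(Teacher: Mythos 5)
Your proposal is correct and follows essentially the same route as the paper's own proof: scaling to a fixed reference simplex via the affine map, invoking equivalence of norms on the finite-dimensional space $\mathcal P^k(\hat K)$ to compare the $L^2$ norm with the sum of squared nodal values, and then using shape regularity to bound the Jacobian determinant by $h_K^d$ from both sides before summing over elements. Your explicit remark that unisolvence (the nodal basis property) is what makes the nodal-value sum a genuine norm is a point the paper leaves implicit, but it is the same argument.
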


The proof of this lemma is essentially given in \cite{MR1068202}; we provide the details for the sake of completeness.
\begin{proof}
	We only prove the first inequality; the second is similar. Let $\widehat K$ be a reference element and let $\mathcal P^k(\widehat K)$ be the space of polynomials of degree up to $k$ defined on $\widehat K$. Since  $\mathcal P^k(\widehat K)$ is finite dimensional, there exist positive constants $\hat{c}_1$ and $\hat{c}_2$ depending only on $k$ such that
	\begin{align}\label{in_ref}
	\hat{c}_1\sum_{i=1}^n |p(\widehat  \xi_i)|^2 \le \int_{\widehat K} |p|^2 \le \hat{c}_2\sum_{i=1}^n |p(\widehat \xi_i)|^2,
	\end{align}
	for all $p\in \mathcal P^k(\widehat K)$, where $\{\widehat \xi_i\}_{i=1}^n$ are the nodal points on the reference element.%, and $n = k(k+1)/2$ in 2D and  $n = k(k+1)(k+2)/6$ in 3D.
	
	Now for $ K \in \mathcal T_h $, let $w\in W_h(K)$ and set $ x = \mathbb F \widehat x =  \mathbb B \widehat x + b$, where $\mathbb F$ is the affine mapping from the reference element to $ K $. Since  $w|_K \circ \mathbb F \in\mathcal P^k(\widehat K) $, we obtain %from the above inequality \eqref{in_ref} 
	that
	\begin{align*}
	\hat{c}_1\sum_{i=1}^{\ell_K} |w(\xi_i^K)|^2 \le  |\mathrm{det} \mathbb{B}|^{-1} \int_{ K} |w|^2 \le \hat{c}_2\sum_{i=1}^{\ell_K} |w(\xi_i^K)|^2.
	\end{align*}
	Since the mesh is uniformly shape regular, there exist positive constants $ \hat{c}_3 $ and $ \hat{c}_4 $ depending only on the regularity constant of the mesh such that $\hat{c}_3 h_K^d \leq |\mathrm{det} \mathbb B| \leq \hat{c}_4 h_K^d$. This implies that
	\begin{align*}
	\hat{c}_1\hat{c}_3 \sum_{i=1}^{\ell_K} |w(\xi_i^K)|^2\,h^d_K \le  \int_{ K} |w|^2 \le \hat{c}_2\hat{c}_4\sum_{i=1}^{\ell_K} |w(\xi_i^K)|^2\,h^d_K,
	\end{align*}
	and the result follows with $c_1^2:=\hat{c}_1\hat{c}_3$ and $c_2^2:=\hat{c}_2\hat{c}_4$. This completes the proof.
\end{proof}

\subsection{Step 4: Estimate of the nonlinear term}
The crucial component in the analysis is estimating the error in the nonlinear term.  We decompose $F(-\bm q, u)-\mathcal I_hF(-\bm q_h, u_h)$ as 
\begin{align*}
F(-\bm q, u)&-\mathcal I_hF(-\bm q_h, u_h) \\
&= F(-\bm q, u)- \mathcal I_h F(-\bm q, u)    + \mathcal I_h F(-\bm q, u)  -  \mathcal I_h F(-\bm \Pi_V\bm q, \Pi_W u)  \\
& \quad + \mathcal I_h F(-\bm \Pi_V\bm q, \Pi_W u)  -\mathcal I_hF(-\bm q_h, u_h)\\
&=: R_1 + R_2 + R_3.
\end{align*}
The first term $R_1$ can be bounded by the standard FE interpolation error \eqref{inter} in Lemma \ref{lemmainter} due to the smoothness assumption for $F(-\bm q, u)$. For the  terms $R_2$ and $R_3$, we have the following estimates.
\begin{lemma}
	We have 
	\begin{align*}
	\|\mathcal I_h F(-\bm q, u)  -  \mathcal I_h F(-\bm \Pi_V\bm q, \Pi_W u) \|_{\mathcal T_h}&\le \frac {Lc_2}{c_1}(\|\bm \Pi_V \bm q - \bm q\|_{\mathcal T_h} + \| \bm q - \mathcal I_h\bm q\|_{\mathcal T_h}) \\
	& \quad +\frac {Lc_2}{c_1}( \| \Pi_W u- u\|_{\mathcal T_h} + \|  u- \mathcal I_h u\|_{\mathcal T_h}),\\
	\| \mathcal I_h F(-\bm \Pi_V\bm q, \Pi_W u)  -\mathcal I_hF(-\bm q_h, u_h)\|_{\mathcal T_h}  &\le \frac{Lc_2}{c_1}(\|\bm \Pi_V \bm q - \bm q_h\|_{\mathcal T_h} + \| \Pi_W u- u_h\|_{\mathcal T_h}).
	\end{align*}
\end{lemma}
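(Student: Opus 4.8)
The plan is to prove both estimates by the same two-step mechanism: first pass from the $L^2$ norm $\|\cdot\|_{\mathcal T_h}$ to the discrete nodal norm $\|\cdot\|_h$ using the upper bound in Lemma \ref{lemma_eq_norm}, and then exploit that $\mathcal I_h$ reproduces nodal values so that the Lipschitz condition \eqref{lip} can be applied pointwise at each FE node $\xi_i^K$. Concretely, since $\mathcal I_h g$ interpolates $g$ at the nodes, for any pair of arguments we have $[\mathcal I_h F(-\bm a, v)](\xi_i^K) = F(-\bm a(\xi_i^K), v(\xi_i^K))$, so the difference of two interpolants evaluated at a node is exactly a difference of $F$-values, to which \eqref{lip} applies directly.

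For the term $R_3$, both $\bm\Pi_V\bm q,\bm q_h$ lie in $\bm V_h$ and both $\Pi_W u, u_h$ lie in $W_h$. I would therefore write, using $\|\cdot\|_{\mathcal T_h}\le c_2\|\cdot\|_h$,
\[
\|R_3\|_{\mathcal T_h}^2 \le c_2^2\sum_{K\in\mathcal T_h}\sum_{i=1}^{\ell_K} \big|F(-\bm\Pi_V\bm q(\xi_i^K),\Pi_W u(\xi_i^K)) - F(-\bm q_h(\xi_i^K),u_h(\xi_i^K))\big|^2 h_K^d,
\]
bound each summand by $L\big(|(\bm\Pi_V\bm q-\bm q_h)(\xi_i^K)| + |(\Pi_W u-u_h)(\xi_i^K)|\big)$ via \eqref{lip}, and then apply Minkowski's inequality for the weighted $\ell^2$ norm to separate the $\bm q$- and $u$-contributions. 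Each resulting piece is precisely $\|\bm\Pi_V\bm q - \bm q_h\|_h$ or $\|\Pi_W u - u_h\|_h$, and since these differences lie in $\bm V_h$ and $W_h$ the lower bound $c_1\|\cdot\|_h\le\|\cdot\|_{\mathcal T_h}$ converts them back to $\|\cdot\|_{\mathcal T_h}$, yielding the factor $c_2/c_1$ and the stated bound.

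The term $R_2$ is handled the same way, except that the arguments $\bm q$ and $u$ are the exact solution and do not belong to the discrete spaces, so the nodal differences $|(\bm q - \bm\Pi_V\bm q)(\xi_i^K)|$ cannot immediately be identified with $\|\cdot\|_h$ of a discrete function. The observation that resolves this is that interpolation is exact at the nodes, i.e.\ $\bm q(\xi_i^K) = [\mathcal I_h\bm q](\xi_i^K)$ and $u(\xi_i^K) = [\mathcal I_h u](\xi_i^K)$. Hence $|(\bm q - \bm\Pi_V\bm q)(\xi_i^K)| = |(\mathcal I_h\bm q - \bm\Pi_V\bm q)(\xi_i^K)|$ with $\mathcal I_h\bm q - \bm\Pi_V\bm q \in \bm V_h$, so after the same Lipschitz-plus-Minkowski step the nodal sums become $\|\mathcal I_h\bm q - \bm\Pi_V\bm q\|_h$ and $\|\mathcal I_h u - \Pi_W u\|_h$, to which the norm equivalence now applies. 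A final triangle inequality splitting $\mathcal I_h\bm q - \bm\Pi_V\bm q = (\mathcal I_h\bm q - \bm q) + (\bm q - \bm\Pi_V\bm q)$, and likewise in $u$, produces the four projection and interpolation error terms appearing in the claim.

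I expect the only delicate point to be this insertion of the nodal interpolant in $R_2$: one must verify that replacing $\bm q$ by $\mathcal I_h\bm q$ (and $u$ by $\mathcal I_h u$) inside the nodal norm is an \emph{exact} identity, not merely an approximation, before the norm equivalence of Lemma \ref{lemma_eq_norm} may be invoked, since that lemma applies only to functions already in $\bm V_h$ and $W_h$. Everything else is a routine combination of \eqref{lip}, Minkowski's inequality, and Lemma \ref{lemma_eq_norm}.
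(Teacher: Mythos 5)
Your proposal is correct and takes essentially the same route as the paper's proof: pass to the nodal norm $\|\cdot\|_h$ via Lemma \ref{lemma_eq_norm}, use nodal exactness of $\mathcal I_h$ to apply the Lipschitz bound \eqref{lip} pointwise at the nodes, insert $\mathcal I_h\bm q$ and $\mathcal I_h u$ so that the norm equivalence is applied only to genuine discrete functions, and finish with the triangle inequality. The ``delicate point'' you flag is resolved in the paper by exactly the identity you propose, namely $\|\bm \Pi_V \bm q - \bm q\|_h = \|\bm \Pi_V \bm q - \mathcal I_h\bm q\|_h$ and $\|\Pi_W u - u\|_h = \|\Pi_W u - \mathcal I_h u\|_h$, after which both directions of the norm equivalence apply.
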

\begin{proof}
	We prove the first inequality; the second is similar.  From inequality  \eqref{eq_norm} in Lemma \ref{lemma_eq_norm}, we have
	\begin{align*}
	\hspace{2em}&\hspace{-2em}\|\mathcal I_h F(-\bm q, u)  -  \mathcal I_h F(-\bm \Pi_V\bm q, \Pi_W u) \|_{\mathcal T_h}\\
	&\le  {c_2}\|\mathcal I_hF(-\bm q, u)-\mathcal I_hF(-\bm \Pi_V \bm q, \Pi_W u)\|_h\\
	&=  {c_2}\|F(-\bm q, u)-F(-\bm \Pi_V \bm q, \Pi_W u)\|_h\\
	&\le  Lc_2(\|\bm \Pi_V \bm q - \bm q\|_h + \| \Pi_W u- u\|_h)\\
	&= Lc_2 (\|\bm \Pi_V \bm q - \mathcal I_h\bm q\|_h + \| \Pi_W u- \mathcal I_h u\|_h)\\
	&\le \frac {Lc_2}{c_1}(\|\bm \Pi_V \bm q - \mathcal I_h\bm q\|_{\mathcal T_h} + \| \Pi_W u- \mathcal I_h u\|_{\mathcal T_h})\\
	&\le \frac {Lc_2}{c_1}(\|\bm \Pi_V \bm q - \bm q\|_{\mathcal T_h} + \| \bm q - \mathcal I_h\bm q\|_{\mathcal T_h} + \| \Pi_W u- u\|_{\mathcal T_h} + \|  u- \mathcal I_h u\|_{\mathcal T_h}).
	\end{align*}
	%	apply  the same argument gives
	%	\begin{align*}
	%	\| \mathcal I_h F(-\bm \Pi_V\bm q, \Pi_W u)  -\mathcal I_hF(-\bm q_h, u_h)\|_{\mathcal T_h} \le  \frac{Lc_2}{c_1}(\|\bm \Pi_V \bm q - \bm q_h\|_{\mathcal T_h} + \| \Pi_W u- u_h\|_{\mathcal T_h}).
	%	\end{align*}
\end{proof}

%\begin{remark}
%	For the analysis of the standard HDG method, we decompose the nonlinear term $F(-\bm q, u)- %F(-\bm q_h, u_h)$ in the following way:
%	\begin{align*}
%	F(-\bm q, u)-F(-\bm q_h, u_h)  &= F(-\bm q, u)- F(-\bm \Pi_V\bm q, \Pi_W u)  + F(-\bm \Pi_V\bm %q, \Pi_W u)  - F(-\bm q_h, u_h).
%	\end{align*}
%	Using the global Lipschitz condition, we have 
%	\begin{align*}
%	\|F(-\bm q, u)  -   F(-\bm \Pi_V\bm q, \Pi_W u) \|_{\mathcal T_h} 
%	&\le 2L (\|\bm \Pi_V \bm q - \bm q\|_{\mathcal T_h} + \| \Pi_W u- u\|_{\mathcal T_h}),\\
%	\| F(-\bm \Pi_V\bm q, \Pi_W u)  - F(-\bm q_h, u_h)\|_{\mathcal T_h} &\le 2L(\|\bm \Pi_V \bm q - %\bm q_h\|_{\mathcal T_h} + \| \Pi_W u- u_h\|_{\mathcal T_h}).
%	\end{align*}
%	This is the main difference in the analysis for the standard HDG method.
%\end{remark}

To prove the main result for the Interpolatory HDG method, we use the following integral Gronwall inequality, which can be found in \cite{MR2431403}.
\begin{lemma}\label{con_gr_ineq}
	Let $f, g, h$ be piecewise continuous nonnegative functions defined on $(a, b)$.  If $g$ is nondecreasing and there is a positive constant $C$ independent of $t$ such that
	\begin{align*}
	\forall t\in (a,b),\quad f(t)+h(t)\le g(t) + C\int_a^t f(s) ds,
	\end{align*}
	then
	\begin{align*}
	\forall t\in (a,b),\quad f(t)+h(t)\le e^{C(t-a)}g(t).
	\end{align*}
\end{lemma}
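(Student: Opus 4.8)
The plan is to reduce the stated two-term inequality to the classical scalar integral Gronwall inequality by first discarding the nonnegative term $h$, then carrying out the standard integrating-factor argument, and only at the very end invoking the monotonicity of $g$ to produce the exponential bound. First I would observe that since $h(t)\ge 0$, the hypothesis immediately yields the scalar inequality $f(t)\le g(t)+C\int_a^t f(s)\,ds$ for all $t\in(a,b)$. The term $h$ plays no role in the intermediate estimates and is reinstated only in the final step.

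Next I would introduce the accumulated integral $\Phi(t)=\int_a^t f(s)\,ds$. Because $f$ is piecewise continuous, $\Phi$ is absolutely continuous with $\Phi'(t)=f(t)$ at every point of continuity of $f$ (hence almost everywhere), and $\Phi(a)=0$. The scalar inequality then reads $\Phi'(t)-C\Phi(t)\le g(t)$ almost everywhere. Multiplying by the integrating factor $e^{-C(t-a)}$ converts the left-hand side into an exact derivative, giving $\frac{d}{dt}\bigl(e^{-C(t-a)}\Phi(t)\bigr)\le e^{-C(t-a)}g(t)$ almost everywhere. Since $e^{-C(t-a)}\Phi(t)$ is absolutely continuous, integrating this from $a$ to $t$ and using $\Phi(a)=0$ produces $e^{-C(t-a)}\Phi(t)\le \int_a^t e^{-C(s-a)}g(s)\,ds$.

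Here the monotonicity of $g$ enters decisively: since $g$ is nondecreasing, $g(s)\le g(t)$ for $s\le t$, so the right-hand side is bounded by $g(t)\int_a^t e^{-C(s-a)}\,ds = g(t)\,C^{-1}\bigl(1-e^{-C(t-a)}\bigr)$. Rearranging gives $C\Phi(t)\le g(t)\bigl(e^{C(t-a)}-1\bigr)$. Substituting this back into the full hypothesis $f(t)+h(t)\le g(t)+C\Phi(t)$ and collecting terms yields $f(t)+h(t)\le g(t)\,e^{C(t-a)}$, which is exactly the claimed conclusion.

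I do not anticipate a serious obstacle; the argument is entirely standard and the only point requiring care is the regularity bookkeeping forced by the \emph{piecewise} (rather than everywhere) continuity of $f$. This is handled by noting that $\Phi$ is absolutely continuous, being the indefinite integral of a bounded, piecewise continuous function, so the fundamental theorem of calculus applies to both $\Phi$ and $e^{-C(t-a)}\Phi$, and it suffices for the differential inequality to hold almost everywhere in order to justify the integration step.
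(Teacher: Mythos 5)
Your proof is correct. There is nothing in the paper to compare it against step by step: the paper states this Gronwall lemma without proof, citing \cite{MR2431403}, so your argument supplies a proof rather than paralleling one. What you give is the standard integrating-factor argument, and you handle the two points that actually require care. First, the monotonicity of $g$ is used in the right place: bounding $\int_a^t e^{-C(s-a)}g(s)\,ds \le g(t)\,C^{-1}\bigl(1-e^{-C(t-a)}\bigr)$ is what produces the clean multiplicative bound $e^{C(t-a)}g(t)$; without it one only obtains the convolution form $f(t)+h(t)\le g(t)+C\int_a^t e^{C(t-s)}g(s)\,ds$. Second, the regularity bookkeeping is sound: $\Phi(t)=\int_a^t f(s)\,ds$ is locally absolutely continuous with $\Phi'=f$ a.e.\ (its existence near $a$ is implicit in the hypothesis itself), so multiplying by $e^{-C(t-a)}$ and integrating the a.e.\ differential inequality is legitimate, and the final substitution $f(t)+h(t)\le g(t)+C\Phi(t)\le e^{C(t-a)}g(t)$ correctly reinstates $h$ only at the end. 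One small remark: discarding $h$ at the outset is a clean choice but not forced --- since $h\ge 0$ gives $\int_a^t f\le \int_a^t (f+h)$, the identical argument applied to $F=f+h$ also works and yields the same conclusion.
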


\subsection{Step 5: Estimate of $\varepsilon^u_h$}

\begin{theorem}\label{theorem_err_u}
	We have 
	\begin{align*}
	%	\|\varepsilon_h^u(t)\|_{\mathcal T_h}^2 +\int_0^t (\|\varepsilon_h^{\bm q}\|_{\mathcal T_h}^2 +2\|\sqrt{\tau}(\varepsilon_h^u-\varepsilon_h^{\widehat u})\|_{\partial{\mathcal{T}_h}}^2)\le  e^{\mathcal Lt} \int_0^t \mathcal M.
	\|\varepsilon_h^u(t)\|_{\mathcal T_h}^2 +\int_0^t (\|\varepsilon_h^{\bm q}\|_{\mathcal T_h}^2 +2\|\sqrt{\tau}(\varepsilon_h^u-\varepsilon_h^{\widehat u})\|_{\partial{\mathcal{T}_h}}^2)\le  e^{\mathcal Lt}  \int_0^t \mathcal M,
	\end{align*}
	where 
	\begin{align*}
	\mathcal M &= \|F(-\bm q, u)- \mathcal I_h F(-\bm q, u) \|_{\mathcal T_h}^2  + \|\Pi_Wu_t-u_t\|_{\mathcal T_h}^2 + 2\|\bm{\Pi}_V \bm q  - \bm q\|_{\mathcal T_h}^2\\
	& \quad + \frac{4L^2c_2^2}{c_1^2}(\|\bm{\Pi}_V \bm q  - \bm q\|_{\mathcal T_h}^2 + \|{\Pi}_W u  - u\|_{\mathcal T_h}^2 + \|\mathcal I_h \bm q  - \bm q\|_{\mathcal T_h}^2 + \|\mathcal I_hu  - u\|_{\mathcal T_h}^2),\\
	\mathcal L  &= \frac{2Lc_2}{c_1}+\frac{2L^2c_2^2}{c_1^2} + 3.
	\end{align*}
\end{theorem}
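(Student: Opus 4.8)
The plan is to turn the energy identity of Lemma~\ref{energy_norm} into a form to which the integral Gronwall inequality of Lemma~\ref{con_gr_ineq} applies. First I would multiply that identity by $2$, so that the left-hand side becomes $\|\varepsilon_h^u(t)\|_{\mathcal T_h}^2 + 2\int_0^t(\|\varepsilon_h^{\bm q}\|_{\mathcal T_h}^2 + \|\sqrt{\tau}(\varepsilon_h^u-\varepsilon_h^{\widehat u})\|_{\partial\mathcal T_h}^2)$, which already matches the target except for the factor in front of $\int_0^t\|\varepsilon_h^{\bm q}\|_{\mathcal T_h}^2$. The whole task is then to estimate the three right-hand terms so that exactly one copy of $\int_0^t\|\varepsilon_h^{\bm q}\|_{\mathcal T_h}^2$ is absorbed into the left (leaving coefficient $1$), the genuine projection/interpolation errors are collected into $\int_0^t\mathcal M$, and everything else appears as $\mathcal L\int_0^t\|\varepsilon_h^u\|_{\mathcal T_h}^2$.

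For the two linear terms I would use Cauchy--Schwarz followed by a weighted Young's inequality. For $(\bm\Pi_V\bm q-\bm q,\varepsilon_h^{\bm q})_{\mathcal T_h}$, the split $2ab\le 2a^2+\tfrac12 b^2$ produces $2\|\bm\Pi_V\bm q-\bm q\|_{\mathcal T_h}^2$ (the source of the stand-alone term in $\mathcal M$) plus $\tfrac12\|\varepsilon_h^{\bm q}\|_{\mathcal T_h}^2$, which I absorb on the left. For $(\Pi_Wu_t-u_t,\varepsilon_h^u)_{\mathcal T_h}$, the symmetric split $2ab\le a^2+b^2$ gives $\|\Pi_Wu_t-u_t\|_{\mathcal T_h}^2$ (a term of $\mathcal M$) plus $\|\varepsilon_h^u\|_{\mathcal T_h}^2$, which I route into the Gronwall integrand.

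The crux is the nonlinear term. Here I would insert the decomposition $F(-\bm q,u)-\mathcal I_hF(-\bm q_h,u_h)=R_1+R_2+R_3$ from Step~4 and bound $-2(R_1+R_2+R_3,\varepsilon_h^u)_{\mathcal T_h}$ termwise. The $R_1$ contribution, handled with $2ab\le a^2+b^2$, yields $\|F(-\bm q,u)-\mathcal I_hF(-\bm q,u)\|_{\mathcal T_h}^2$ (into $\mathcal M$) and one more $\|\varepsilon_h^u\|_{\mathcal T_h}^2$. For $R_2$ I use the first estimate of the Step~4 lemma together with $(a+b+c+d)^2\le 4(a^2+b^2+c^2+d^2)$, which is exactly what produces the factor $\tfrac{4L^2c_2^2}{c_1^2}$ in front of the four error terms of $\mathcal M$, again at the cost of one $\|\varepsilon_h^u\|_{\mathcal T_h}^2$. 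The delicate term is $R_3$: using the second estimate of the Step~4 lemma and the identities $\bm\Pi_V\bm q-\bm q_h=\varepsilon_h^{\bm q}$, $\Pi_Wu-u_h=\varepsilon_h^u$, it is bounded by $\tfrac{Lc_2}{c_1}(\|\varepsilon_h^{\bm q}\|_{\mathcal T_h}+\|\varepsilon_h^u\|_{\mathcal T_h})$, so after multiplying by $2\|\varepsilon_h^u\|_{\mathcal T_h}$ it yields a cross term $\tfrac{2Lc_2}{c_1}\|\varepsilon_h^{\bm q}\|_{\mathcal T_h}\|\varepsilon_h^u\|_{\mathcal T_h}$ and a pure term $\tfrac{2Lc_2}{c_1}\|\varepsilon_h^u\|_{\mathcal T_h}^2$. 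I split the cross term with $2AB\le \tfrac12 A^2+2B^2$ (taking $A=\|\varepsilon_h^{\bm q}\|_{\mathcal T_h}$, $B=\tfrac{Lc_2}{c_1}\|\varepsilon_h^u\|_{\mathcal T_h}$), so that precisely $\tfrac12\|\varepsilon_h^{\bm q}\|_{\mathcal T_h}^2$ is absorbed and $\tfrac{2L^2c_2^2}{c_1^2}\|\varepsilon_h^u\|_{\mathcal T_h}^2$ is left over.

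Adding everything, the absorbed flux errors total $\tfrac12+\tfrac12=1$ copy of $\|\varepsilon_h^{\bm q}\|_{\mathcal T_h}^2$, leaving coefficient $1$ on the left as required, while the accumulated coefficient of $\|\varepsilon_h^u\|_{\mathcal T_h}^2$ is $3+\tfrac{2Lc_2}{c_1}+\tfrac{2L^2c_2^2}{c_1^2}=\mathcal L$ (one each from the $u_t$, $R_1$, and $R_2$ terms, plus the two $R_3$ contributions). This gives $\|\varepsilon_h^u(t)\|_{\mathcal T_h}^2+\int_0^t(\|\varepsilon_h^{\bm q}\|_{\mathcal T_h}^2+2\|\sqrt{\tau}(\varepsilon_h^u-\varepsilon_h^{\widehat u})\|_{\partial\mathcal T_h}^2)\le\int_0^t\mathcal M+\mathcal L\int_0^t\|\varepsilon_h^u\|_{\mathcal T_h}^2$, and applying Lemma~\ref{con_gr_ineq} with $f(t)=\|\varepsilon_h^u(t)\|_{\mathcal T_h}^2$, $h(t)$ the cumulative flux/trace integral, $g(t)=\int_0^t\mathcal M$ (nondecreasing since $\mathcal M\ge0$), and $C=\mathcal L$ delivers the claim. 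The main obstacle here is not conceptual but the constant bookkeeping in the Young splittings: the weights must be chosen so that exactly one unit of $\|\varepsilon_h^{\bm q}\|_{\mathcal T_h}^2$ is removed and the $\|\varepsilon_h^u\|_{\mathcal T_h}^2$ coefficients sum to the stated $\mathcal L$, since the genuine analytic content lives in the Step~4 estimates for $R_2$ and $R_3$, which are already in hand.
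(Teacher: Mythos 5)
Your proposal is correct and follows essentially the same route as the paper: the energy identity of Lemma \ref{energy_norm}, Cauchy--Schwarz/Young splittings calibrated to absorb exactly one unit of $\int_0^t\|\varepsilon_h^{\bm q}\|_{\mathcal T_h}^2$, the Step~4 decomposition $R_1+R_2+R_3$ with its two lemma estimates for the nonlinear term, and the integral Gronwall inequality of Lemma \ref{con_gr_ineq}. The only cosmetic difference is that you multiply the identity by $2$ before the Young splittings while the paper does so afterward (using weights $\tfrac14$ instead of your $\tfrac12$), and your constant bookkeeping reproduces the paper's $\mathcal M$ and $\mathcal L$ exactly.
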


\begin{proof} 
	Apply the Cauchy-Schwarz inequality to each term of the right-hand side of the identity in Lemma \ref{energy_norm} to get
	\begin{align*}
	(\bm\Pi_V {\bm{q}} -\bm q, \varepsilon_h^{\bm q})_{\mathcal{T}_h}  
	&\le   \|\bm\Pi_V {\bm{q}} -\bm q\|_{\mathcal{T}_h}^2 + \frac 1 4 \|\varepsilon_h^{\bm q}\|_{\mathcal{T}_h}^2,\\
	(\Pi_Wu_t-u_t,\varepsilon_h^u)_{\mathcal T_h} 
	&\le  \frac 1 2 \|\Pi_Wu_t-u_t\|_{\mathcal T_h}^2 + \frac 1 2  \|\varepsilon_h^u\|_{\mathcal T_h}^2,\\
	%	\end{align*}
	%	and
	%	\begin{align*}
	%
	(F(-\bm q, u) - \mathcal I_h F(-\bm q_h, u_h),\varepsilon_h^u)_{\mathcal T_h}
	&\le  (1+\frac{Lc_2}{c_1}+\frac{L^2c_2^2}{c_1^2}) \|\varepsilon_h^u\|_{\mathcal T_h}^2  + \frac{1}{4} \|\varepsilon_h^{\bm q}\|_{\mathcal T_h}^2+\mathcal N,
	\end{align*}
	where 
	\begin{align*}
	\mathcal N &= \frac1 2 \|F(-\bm q, u)- \mathcal I_h F(-\bm q, u) \|_{\mathcal T_h}^2 \\
	& \quad + \frac{2L^2c_2^2}{c_1^2}(\|\bm{\Pi}_V \bm q  - \bm q\|_{\mathcal T_h}^2 + \|{\Pi}_W u  - u\|_{\mathcal T_h}^2 + \|\mathcal I_h \bm q  - \bm q\|_{\mathcal T_h}^2 + \|\mathcal I_hu  - u\|_{\mathcal T_h}^2).
	\end{align*}
	Lemma \ref{energy_norm} implies
	\begin{align*}
	&\|\varepsilon_h^u(t)\|_{\mathcal T_h}^2 +\int_0^t (\|\varepsilon_h^{\bm q}\|_{\mathcal T_h}^2 +2\|\sqrt{\tau}(\varepsilon_h^u-\varepsilon_h^{\widehat u})\|_{\partial{\mathcal{T}_h}}^2)\le   \int_0^t \mathcal M + \mathcal L \int_0^t \|\varepsilon_h^u(t)\|_{\mathcal T_h}^2,
	\end{align*}
	where 
	\begin{align*}
	\mathcal M &= 2\mathcal N + \|\Pi_Wu_t-u_t\|_{\mathcal T_h}^2 +2 \|\bm{\Pi}_V \bm q  - \bm q\|_{\mathcal T_h}^2\\
	&= \|F(-\bm q, u)- \mathcal I_h F(-\bm q, u) \|_{\mathcal T_h}^2  + \|\Pi_Wu_t-u_t\|_{\mathcal T_h}^2 + 2\|\bm{\Pi}_V \bm q  - \bm q\|_{\mathcal T_h}^2\\
	& \quad + \frac{4L^2c_2^2}{c_1^2}(\|\bm{\Pi}_V \bm q  - \bm q\|_{\mathcal T_h}^2 + \|{\Pi}_W u  - u\|_{\mathcal T_h}^2 + \|\mathcal I_h \bm q  - \bm q\|_{\mathcal T_h}^2 + \|\mathcal I_hu  - u\|_{\mathcal T_h}^2),\\
	\mathcal L  &= \frac{2Lc_2}{c_1}+\frac{2L^2c_2^2}{c_1^2} + 3.
	\end{align*}
	The integral Gronwall inequality in Lemma \ref{con_gr_ineq} gives the result.
\end{proof}

\subsection{Step 6: Estimate of $\varepsilon_h^q$ in $L^{\infty}(L^2)$ by an energy argument}

\begin{theorem}\label{error_ana_q}
	We have
	\begin{align*}
	\|\varepsilon_h^{\bm q}(t)\|_{\mathcal T_h}^2 +\|\sqrt{\tau}(\varepsilon_h^u(t)-\varepsilon_h^{\widehat u}(t))\|_{\partial{\mathcal{T}_h}}^2\le  C \bigg(\|\bm\Pi_V {\bm{q}} -\bm q)(0)\|_{\mathcal{T}_h}^2+ \int_0^t \mathcal G\bigg).
	\end{align*}
	where 
	\begin{align*}
	\mathcal G &=\|F(-\bm q, u)- \mathcal I_h F(-\bm q, u) \|_{\mathcal T_h}^2+ \|\bm\Pi_V {\bm{q}}_t -\bm q_t\|_{\mathcal{T}_h}^2 +\|\Pi_Wu_t-u_t\|_{\mathcal T_h}^2\\
	& \quad + \|\bm{\Pi}_V \bm q  - \bm q\|_{\mathcal T_h}^2 + \|{\Pi}_W u  - u\|_{\mathcal T_h}^2 + \|\mathcal I_h \bm q  - \bm q\|_{\mathcal T_h}^2 + \|\mathcal I_hu  - u\|_{\mathcal T_h}^2.
	\end{align*}
\end{theorem}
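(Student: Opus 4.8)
The plan is to run an energy argument on the \emph{time-differentiated} error equations, parallel to the one carried out for $\varepsilon_h^u$ in Lemma \ref{energy_norm}, but now organized to produce $\tfrac{d}{dt}\|\varepsilon_h^{\bm q}\|_{\mathcal T_h}^2$ rather than $\tfrac{d}{dt}\|\varepsilon_h^u\|_{\mathcal T_h}^2$. First I would differentiate \eqref{err_eq_a} in time; since the projections $\bm\Pi_V,\Pi_W,P_M$ are linear and defined pointwise in $t$, they commute with $\partial_t$ (so $\partial_t\bm\Pi_V\bm q=\bm\Pi_V\bm q_t$, etc.), giving an equation for $\partial_t\varepsilon_h^{\bm q}$. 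I would then test the differentiated \eqref{err_eq_a} with $\bm r=\varepsilon_h^{\bm q}$, test \eqref{err_eq_b} with $w=\partial_t\varepsilon_h^u$, and test \eqref{err_eq_c} with $\mu=-\partial_t\varepsilon_h^{\widehat u}$, and add the three identities.

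The crucial algebraic step is that, exactly as the quantity $\Theta$ collapsed in Lemma \ref{energy_norm}, the volume and interface contributions here telescope after one integration by parts and the use of the numerical flux definition \eqref{numerical_flux}: the $(\partial_t\varepsilon_h^u,\nabla\cdot\varepsilon_h^{\bm q})_{\mathcal T_h}$ and $(\varepsilon_h^{\bm q},\nabla\partial_t\varepsilon_h^u)_{\mathcal T_h}$ terms cancel up to a boundary term, and the remaining interface terms combine into $\langle\tau(\varepsilon_h^u-\varepsilon_h^{\widehat u}),\partial_t(\varepsilon_h^u-\varepsilon_h^{\widehat u})\rangle_{\partial\mathcal T_h}=\tfrac12\tfrac{d}{dt}\|\sqrt\tau(\varepsilon_h^u-\varepsilon_h^{\widehat u})\|_{\partial\mathcal T_h}^2$ (using that $\tau$ is independent of $t$ and $\varepsilon_h^{\widehat u}=0$ on $\varepsilon_h^\partial$). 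This yields the identity
\begin{align*}
\tfrac12\tfrac{d}{dt}\|\varepsilon_h^{\bm q}\|_{\mathcal T_h}^2 + \|\partial_t\varepsilon_h^u\|_{\mathcal T_h}^2 + \tfrac12\tfrac{d}{dt}\|\sqrt\tau(\varepsilon_h^u-\varepsilon_h^{\widehat u})\|_{\partial\mathcal T_h}^2 &= (\bm\Pi_V\bm q_t-\bm q_t,\varepsilon_h^{\bm q})_{\mathcal T_h} + (\Pi_Wu_t-u_t,\partial_t\varepsilon_h^u)_{\mathcal T_h} \\
&\quad - (F(-\bm q,u)-\mathcal I_hF(-\bm q_h,u_h),\partial_t\varepsilon_h^u)_{\mathcal T_h}.
\end{align*}
I would bound the right-hand side by Cauchy--Schwarz and Young's inequality, choosing the weights so that the two terms carrying $\partial_t\varepsilon_h^u$ are absorbed into $\|\partial_t\varepsilon_h^u\|_{\mathcal T_h}^2$ on the left, while the first term gives $\tfrac12\|\bm\Pi_V\bm q_t-\bm q_t\|_{\mathcal T_h}^2+\tfrac12\|\varepsilon_h^{\bm q}\|_{\mathcal T_h}^2$. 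For the nonlinear factor I would insert the decomposition $F(-\bm q,u)-\mathcal I_hF(-\bm q_h,u_h)=R_1+R_2+R_3$ from Step 4 together with the bounds proved there, so that $\|R_1\|_{\mathcal T_h},\|R_2\|_{\mathcal T_h}$ reduce to the interpolation/projection data terms in $\mathcal G$ and $\|R_3\|_{\mathcal T_h}^2\le C(\|\varepsilon_h^{\bm q}\|_{\mathcal T_h}^2+\|\varepsilon_h^u\|_{\mathcal T_h}^2)$. I would then integrate over $(0,t)$, discard the nonnegative $\int_0^t\|\partial_t\varepsilon_h^u\|_{\mathcal T_h}^2$, and control $\int_0^t\|\varepsilon_h^u\|_{\mathcal T_h}^2$ directly via Theorem \ref{theorem_err_u}, whose right-hand side is precisely the collection of data terms already present in $\mathcal G$.

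Two points need care, and the initial-data point is the one I expect to be the main obstacle. Integration produces $\tfrac12\|\varepsilon_h^{\bm q}(0)\|_{\mathcal T_h}^2+\tfrac12\|\sqrt\tau(\varepsilon_h^u(0)-\varepsilon_h^{\widehat u}(0))\|_{\partial\mathcal T_h}^2$, which is not prescribed a priori, since only $u_h(0)=\Pi_Wu_0$ (hence $\varepsilon_h^u(0)=0$) is given while $\bm q_h(0)$ and $\widehat u_h(0)$ are only implicitly determined. To control it I would evaluate \eqref{err_eq_a} and \eqref{err_eq_c} at $t=0$, test them with $\bm r=\varepsilon_h^{\bm q}(0)$ and $\mu=-\varepsilon_h^{\widehat u}(0)$, and add; using $\varepsilon_h^u(0)=0$ and \eqref{numerical_flux} this reduces to $\|\varepsilon_h^{\bm q}(0)\|_{\mathcal T_h}^2+\|\sqrt\tau\,\varepsilon_h^{\widehat u}(0)\|_{\partial\mathcal T_h}^2=(\bm\Pi_V\bm q(0)-\bm q(0),\varepsilon_h^{\bm q}(0))_{\mathcal T_h}$, and one more use of Young's inequality bounds both initial terms by $C\|(\bm\Pi_V\bm q-\bm q)(0)\|_{\mathcal T_h}^2$. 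Finally, the surviving $C\int_0^t\|\varepsilon_h^{\bm q}\|_{\mathcal T_h}^2$ on the right is removed by the integral Gronwall inequality in Lemma \ref{con_gr_ineq}, taking $f=\|\varepsilon_h^{\bm q}\|_{\mathcal T_h}^2$ and $g$ the (nondecreasing) sum of the initial term and $\int_0^t\mathcal G$, which produces exactly the stated estimate with $\mathcal G$ as claimed.
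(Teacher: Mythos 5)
Your proposal is correct and follows essentially the same route as the paper: differentiate the flux error equation in time, test with $\varepsilon_h^{\bm q}$, $\partial_t\varepsilon_h^u$, $-\partial_t\varepsilon_h^{\widehat u}$, collapse $\Theta$, absorb the $\partial_t\varepsilon_h^u$ terms via Young's inequality, bound the nonlinearity through the $R_1+R_2+R_3$ decomposition, invoke Theorem \ref{theorem_err_u} for $\int_0^t\|\varepsilon_h^u\|_{\mathcal T_h}^2$, and finish with the integral Gronwall inequality of Lemma \ref{con_gr_ineq}. Your treatment of the initial term—testing \eqref{err_eq_a} and \eqref{err_eq_c} at $t=0$ with $\varepsilon_h^{\bm q}(0)$ and $-\varepsilon_h^{\widehat u}(0)$—yields exactly the identity $\|\varepsilon_h^{\bm q}(0)\|_{\mathcal T_h}^2+\|\sqrt{\tau}\,\varepsilon_h^{\widehat u}(0)\|_{\partial\mathcal T_h}^2=((\bm\Pi_V\bm q-\bm q)(0),\varepsilon_h^{\bm q}(0))_{\mathcal T_h}$ that the paper obtains by evaluating the energy identity of Lemma \ref{energy_norm} at $t=0$, so the two arguments coincide.
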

\begin{proof}
	To prove this result, we use a slightly different set of equations for the projection of the errors than the equations in Lemma \ref{error_u}. We keep all of the error equations except \eqref{err_eq_a}, which we replace by the equation obtained by differentiating \eqref{err_eq_a} with respect to time:
	\begin{subequations}\label{err_eq_2}
		\begin{align}
		(\partial_t\varepsilon_h^{\bm q},\bm r)_{\mathcal T_h}-(\partial_t\varepsilon_h^u,\nabla\cdot\bm r)_{\mathcal T_h}+\langle \partial_t\varepsilon_h^{\widehat u},\bm r\cdot\bm n\rangle_{\partial\mathcal T_h} &= (\bm\Pi_V\bm q_t-\bm q_t,\bm r)_{\mathcal T_h},\label{err_eq_a2}\\
		(\partial_t\varepsilon_h^u,w)_{\mathcal T_h}-(\varepsilon_h^{\bm q} ,\nabla w)_{\mathcal{T}_h}+\langle{\varepsilon}_h^{\widehat {\bm q}}\cdot\bm n,w\rangle_{\partial{\mathcal{T}_h}} \quad &\nonumber\\
		+ (F(-\bm q,u)-\mathcal I_hF(-\bm q_h, u_h),w)_{\mathcal T_h} &=(\Pi_Wu_t-u_t,w)_{\mathcal T_h},\label{err_eq_b2}\\
		\langle{\varepsilon}_h^{\widehat {\bm q}}\cdot \bm{n} ,\mu\rangle_{\partial{\mathcal{T}_h}\backslash \varepsilon_h^{\partial}} &= 0,\label{err_eq_c2}\\
		\varepsilon_h^u|_{t=0}&=0,\label{err_eq_d2}
		\end{align}
	\end{subequations}
	for all $(\bm r,w,\mu)\in \bm V_h\times W_h\times M_h$, where ${\varepsilon}_h^{\widehat {\bm q}} \cdot\bm n =\varepsilon_h^{\bm q}\cdot \bm n +\tau(\varepsilon_h^u-\varepsilon_h^{\widehat u}) $ on $\partial \mathcal T_h$. 
	
	Next,  take  $\bm r=\varepsilon_h^{\bm q} $ in \eqref{err_eq_a2}, $w=\partial_t\varepsilon_h^u$  in \eqref{err_eq_b2}, and $\mu=-\partial_t\varepsilon_h^{\widehat u}$  in \eqref{err_eq_c2} to obtain
	\begin{align*}
	\|\partial_t\varepsilon_h^u\|^2_{\mathcal T_h}+(\partial_t\varepsilon_h^{\bm q},\varepsilon_h^{\bm q})_{\mathcal T_h}+\Theta &=(\bm\Pi_V\bm q_t-\bm q_t,\varepsilon_h^{\bm q})_{\mathcal T_h}+(\Pi_Wu_t-u_t,\partial_t\varepsilon_h^u)_{\mathcal T_h}\\
	& \quad -  (F(-\bm q, u)-\mathcal I_hF(-\bm q_h, u_h),\partial_t\varepsilon_h^u)_{\mathcal T_h},
	\end{align*}
	where
	\begin{align*}
	\Theta &=-(\partial_t\varepsilon_h^u,\nabla\cdot\varepsilon_h^{\bm q})_{\mathcal T_h}+\langle\partial_t\varepsilon_h^{\widehat u},\varepsilon_h^{\bm q}\cdot\bm n\rangle_{\partial\mathcal T_h}-(\varepsilon^n_{\bm q},\nabla(\partial_t\varepsilon_h^u))_{\mathcal T_h}\\
	& \quad +\langle \varepsilon_h^{\widehat {\bm q}}\cdot\bm n,\partial_t\varepsilon_h^u\rangle_{\partial\mathcal T_h} -\langle \varepsilon_h^{\widehat {\bm q}}\cdot\bm n,\partial_t\varepsilon_h^{\widehat u}\rangle_{\partial\mathcal T_h}\\
	&=\tau\langle\varepsilon_h^u-\varepsilon_h^{\widehat u},\partial_t\varepsilon_h^u-\partial_t\varepsilon_h^{\widehat u}\rangle_{\partial\mathcal T_h}.
	\end{align*}
	Here, we used the definition of $\varepsilon_h^{\widehat{\bm q}}$ in \eqref{numerical_flux} and integrated by parts. Integrating in time over the interval $(0,t)$ gives the following identity:
	\begin{align*}
	\hspace{2em}&\hspace{-2em}\frac 1 2 [\|\varepsilon_h^{\bm q}(t)\|_{\mathcal T_h}^2 +\|\sqrt{\tau}(\varepsilon_h^u(t)-\varepsilon_h^{\widehat u}(t))\|_{\partial{\mathcal{T}_h}}^2]+\int_0^t \|\partial_t\varepsilon_h^{u}\|_{\mathcal T_h}^2 \\
	&= \frac 12 [\|\varepsilon_h^{\bm q}(0)\|_{\mathcal T_h}^2 +\|\sqrt{\tau}(\varepsilon_h^u(0)-\varepsilon_h^{\widehat u}(0))\|_{\partial{\mathcal{T}_h}}^2 ]\\
	& \quad +\int_0^t  (\bm\Pi_V {\bm{q}_t} -\bm q_t, \varepsilon_h^{\bm q})_{\mathcal{T}_h} 
	+(\Pi_Wu_t-u_t,\partial_t\varepsilon_h^u)_{\mathcal T_h} \\
	& \quad - \int_0^t  (F(-\bm q, u) - \mathcal I_h F(-\bm q_h, u_h),\partial_t\varepsilon_h^u)_{\mathcal T_h}.
	\end{align*}
	Applying the Cauchy-Schwarz inequality to each term of the right-hand side of the above identity gives
	\begin{align*}
	(\bm\Pi_V {\bm{q}}_t -\bm q_t, \varepsilon_h^{\bm q})_{\mathcal{T}_h}  
	&\le  \frac 1 2 \|\bm\Pi_V {\bm{q}}_t -\bm q_t\|_{\mathcal{T}_h}^2 + \frac 1 2 \|\varepsilon_h^{\bm q}\|_{\mathcal{T}_h}^2,\\
	(\Pi_Wu_t-u_t,\partial_t\varepsilon_h^u)_{\mathcal T_h} 
	&\le  \|\Pi_Wu_t-u_t\|_{\mathcal T_h}^2 + \frac 1 4  \|\partial_t\varepsilon_h^u\|_{\mathcal T_h}^2,\\
	(F(-\bm q, u) - \mathcal I_h F(-\bm q_h, u_h),\partial_t\varepsilon_h^u)_{\mathcal T_h} &=\frac{3}{4} \|\partial_t\varepsilon_h^u\|_{\mathcal T_h}^2+ \frac{2L^2c_2^2}{c_1^2} (\|\varepsilon_h^u\|_{\mathcal T_h}^2 +  \|\varepsilon_h^{\bm q}\|_{\mathcal T_h}^2)+\mathcal K,
	\end{align*}
	where
	\begin{align*}
	\mathcal K &=\|F(-\bm q, u)- \mathcal I_h F(-\bm q, u) \|_{\mathcal T_h}^2\\  & \quad + \frac{4L^2c_2^2}{c_1^2}(\|\bm{\Pi}_V \bm q  - \bm q\|_{\mathcal T_h}^2 + \|{\Pi}_W u  - u\|_{\mathcal T_h}^2 + \|\mathcal I_h \bm q  - \bm q\|_{\mathcal T_h}^2 + \|\mathcal I_hu  - u\|_{\mathcal T_h}^2).
	\end{align*}
	This implies
	\begin{align*}
	\hspace{1em}&\hspace{-1em} \|\varepsilon_h^{\bm q}(t)\|_{\mathcal T_h}^2 +\|\sqrt{\tau}(\varepsilon_h^u(t)-\varepsilon_h^{\widehat u}(t))\|_{\partial{\mathcal{T}_h}}^2 \\
	&\le   [\|\varepsilon_h^{\bm q}(0)\|_{\mathcal T_h}^2 +\|\sqrt{\tau}(\varepsilon_h^u(0)-\varepsilon_h^{\widehat u}(0))\|_{\partial{\mathcal{T}_h}}^2 ] + \int_0^t  \mathcal G \\
	& \quad + \frac{4L^2c_2^2}{c_1^2} \int_0^t \|\varepsilon_h^u\|_{\mathcal T_h}^2
	+  \mathcal H  \int_0^t \|\varepsilon_h^{\bm q}(t)\|_{\mathcal T_h}^2,
	\end{align*}
	where 
	\begin{align*}
	\mathcal G &= 2\mathcal K + \|\bm\Pi_V {\bm{q}}_t -\bm q_t\|_{\mathcal{T}_h}^2 +2\|\Pi_Wu_t-u_t\|_{\mathcal T_h}^2\\
	&=2\|F(-\bm q, u)- \mathcal I_h F(-\bm q, u) \|_{\mathcal T_h}^2+ \|\bm\Pi_V {\bm{q}}_t -\bm q_t\|_{\mathcal{T}_h}^2 +2\|\Pi_Wu_t-u_t\|_{\mathcal T_h}^2\\
	& \quad + \frac{8L^2c_2^2}{c_1^2}(\|\bm{\Pi}_V \bm q  - \bm q\|_{\mathcal T_h}^2 + \|{\Pi}_W u  - u\|_{\mathcal T_h}^2 + \|\mathcal I_h \bm q  - \bm q\|_{\mathcal T_h}^2 + \|\mathcal I_hu  - u\|_{\mathcal T_h}^2),\\
	\mathcal H &= \frac{4L^2c_2^2}{c_1^2} + 1.
	\end{align*}
	Apply the integral Gronwall inequality in Lemma \ref{con_gr_ineq} to obtain
	\begin{align*}
	\hspace{2em}&\hspace{-2em}\|\varepsilon_h^{\bm q}(t)\|_{\mathcal T_h}^2 +\|\sqrt{\tau}(\varepsilon_h^u(t)-\varepsilon_h^{\widehat u}(t))\|_{\partial{\mathcal{T}_h}}^2\\
	&\le  e^{\mathcal Ht} \left( [\|\varepsilon_h^{\bm q}(0)\|_{\mathcal T_h}^2 +\|\sqrt{\tau}(\varepsilon_h^u(0)-\varepsilon_h^{\widehat u}(0))\|_{\partial{\mathcal{T}_h}}^2 ] + \int_0^t  \mathcal G  + \frac{4L^2c_2^2}{c_1^2} \int_0^t \|\varepsilon_h^u\|_{\mathcal T_h}^2\right).
	\end{align*}

	Next, differentiate the equation in Lemma \ref{energy_norm} and evaluate the result at $t=0$ to obtain
	\begin{align*}
	\|\varepsilon_h^{\bm q}(0)\|_{\mathcal T_h}^2 +\|\sqrt{\tau}(\varepsilon_h^u-\varepsilon_h^{\widehat u})(0)\|_{\partial{\mathcal{T}_h}}^2 &= ((\bm\Pi_V {\bm{q}} -\bm q)(0), \varepsilon_h^{\bm q}(0))_{\mathcal{T}_h},
	\end{align*}
	since $\varepsilon_h^{u}(0) = 0$. This implies that 
	\begin{align*}
	\|\varepsilon_h^{\bm q}(0)\|_{\mathcal T_h}^2 +\|\sqrt{\tau}(\varepsilon_h^u-\varepsilon_h^{\widehat u})(0)\|_{\partial{\mathcal{T}_h}}^2 \le   \|\bm\Pi_V {\bm{q}} -\bm q)(0)\|_{\mathcal{T}_h}^2.
	\end{align*}
	Since $\|\varepsilon_h^u\|$ has been estimated in Theorem \ref{theorem_err_u}, we have
	\begin{align*}
	\|\varepsilon_h^{\bm q}(t)\|_{\mathcal T_h}^2 +\|\sqrt{\tau}(\varepsilon_h^u(t)-\varepsilon_h^{\widehat u}(t))\|_{\partial{\mathcal{T}_h}}^2\le  C \bigg(\|\bm\Pi_V {\bm{q}} -\bm q)(0)\|_{\mathcal{T}_h}^2+ \int_0^t \mathcal G\bigg).
	\end{align*}
\end{proof}

This completes the proof of our main result, Theorem \ref{main_err_qu}.

\section{Numerical Results}
\label{sec:numerics}

In this section, we consider three examples chosen to demonstrate the performance of the {Interpolatory HDG} method.  The domain is the unit square $\Omega = [0,1]\times [0,1]\subset \mathbb R^2$ in 2D and the unit cube $\Omega = [0,1]\times [0,1]\times [0,1]\subset \mathbb R^3$ in 3D. Backward Euler is applied for the time discretization and the time step is chosen as $\Delta t = h^{k+1}$, where $k$ is the degree of polynomial.  The $ L^2 $ projection is used for the initial data.  We report the errors at the final time $ T = 1 $ for polynomial degrees $ k = 0 $ and $ k = 1 $.

We consider the following examples:
\begin{description}
	\item[Example 1] A reaction diffusion equation (the Allen-Cahn or Chafee-Infante equation): The nonlinear term is $F(\nabla u, u) = u^3-u$ and the source term $f$ is chosen so that the exact solution is $u = \sin(t)\sin(\pi x)\sin(\pi y)$ in 2D and $u = \sin(t)\sin(\pi x)\sin(\pi y)\sin(\pi z)$ in 3D.
	\item[Example 2] A PDE from stochastic optimal control \cite{MR2597943}: The nonlinear term is $F(\nabla u, u) = |\nabla u|^2$ and the source term $f$ is chosen so that the exact solution is $u = e^{-t}\sin(\pi x)\sin(\pi y)$ in 2D.
	\item[Example 3] A scalar Burger's equation: The nonlinear term is $F(\nabla u, u) =[u,u]^T\cdot\nabla u$. The source term $f$ is chosen so that $u = e^{-t}\sin(\pi x)\sin(\pi y)$ is the exact solution in 2D.
	% \item[Example 3] A scalar Burger's equation: The nonlinear term is $F(\nabla u, u) =[u,u]^T\cdot\nabla u$ in 2D  and $F(\nabla u, u) =[u,u,u]^T\cdot\nabla u$ in 3D. The source term $f$ is chosen so that the exact solution is $u = e^{-t}\sin(\pi x)\sin(\pi y)$.
	%
\end{description}
We present 2D Interpolatory HDG results for all examples with $ k = 0 $ and $ k = 1 $; we also display the corresponding results for the standard HDG results for comparison.  Finally, we give 3D Interpolatory HDG numerical results for the reaction diffusion equation when $k=1$.  The results are shown in Table \ref{table:GHDG_2D_CI_eqn}--Table \ref{table:GHDG_2D_Burgers}.  For all examples, the Interpolatory HDG method converges at the optimal rate.  Furthermore, for the 2D reaction diffusion equation, the errors for the Interpolatory HDG method are similar to the standard HDG method when $ k = 1 $.  As indicated previously, the standard HDG is equivalent to the Interpolatory HDG when $ k = 0 $ as the numerical results indicate.% We also give 3D \blue{Interpolatory HDG} numerical results for the reaction diffusion equation when $k=1$ in \Cref{table:GHDG_3D_CI_eqn}.

Note that the nonlinearities of these examples are not globally Lipschitz, as assumed in our theoretical results.  Even for this more difficult case, we observe the same optimal orders of convergence as the ones predicted by the theory for the globally Lipschitz case.

%\subsection{The Chaffee-Infante Equation}

%\begin{example}[The Chaffee-Infante equation]
%	The nonlinear term $F(\nabla u, u) = u^3-u$  and the source term $f$ is chosen such that the exact solution is $u = sin(t)\sin(\pi x)\sin(\pi y)$.
\begin{table}%[!hbp]
	\begin{center}
		\begin{tabular}{|c|c|c|c|c|c|}
			\hline
			$k$ & Mesh &$\norm{\bm {q}-\bm{q}_{h}}_{\mathcal T_h}$& Order& $\norm{{u}-{u}_h}_{\mathcal T_h}$ &Order  \\
			\hline
			\multirow{5}{*}{0} &256 & 3.78e-1 & -  & 1.57e-1   &  -  \\
			\cline{2-6}
			& 1024 & 1.93e-1& 0.97 & 8.43e-2& 0.89\\
			\cline{2-6}
			& 4096 &  9.72e-2& 0.99 & 4.32e-02& 0.96\\
			\cline{2-6}
			& 16384 & 4.88e-2&  0.99 & 2.19e-02& 0.98 \\
			\cline{2-6}
			& 65536 & 2.44e-2& 1.00 & 1.10e-02&  0.99\\
			\hline
			\multirow{5}{*}{1} &256 & 3.21e-2& -  & 1.94e-2 &  -  \\
			\cline{2-6}
			& 1024 & 7.91e-3& 2.02 & 4.96e-3& 1.97\\
			\cline{2-6}
			& 4096 &  1.97e-3& 2.00 & 1.24e-3& 2.00\\
			\cline{2-6}
			& 16384 & 4.92e-4&  2.00 & 3.13e-4& 2.00 \\
			\cline{2-6}
			& 65536 & 1.23e-4& 2.00 & 7.82e-5&  2.00\\
			\hline
		\end{tabular}
	\end{center}
	\caption{\label{table:GHDG_2D_CI_eqn}Interpolatory HDG Method for the 2D reaction diffusion equation}
	% \caption{\label{table:GHDG_2D_CI_eqn}Interpolatory HDG Method for the 2D Allen -Cahn type equation}
\end{table}

\begin{table}%[!hbp]
	\begin{center}
		\begin{tabular}{|c|c|c|c|c|c|}
			\hline
			$k$ & Mesh &$\norm{\bm {q}-\bm {q}_{h}}_{\mathcal T_h}$& Order& $\norm{{u}-{u}_h}_{\mathcal T_h}$ &Order  \\
			\hline
			\multirow{5}{*}{0} &256 & 3.78e-1 & -  & 1.57e-1   &  -  \\
			\cline{2-6}
			& 1024 & 1.93e-1& 0.97 & 8.43e-2& 0.89\\
			\cline{2-6}
			& 4096 &  9.72e-2& 0.99 & 4.32e-02& 0.96\\
			\cline{2-6}
			& 16384 & 4.88e-2&  0.99 & 2.19e-02& 0.98 \\
			\cline{2-6}
			& 65536 & 2.44e-2& 1.00 & 1.10e-02&  0.99\\
			\hline
			\multirow{5}{*}{1} &256 & 2.98e-2& -  & 1.96e-2 &  -  \\
			\cline{2-6}
			& 1024 & 7.57e-3& 2.02 & 4.97e-3& 1.97\\
			\cline{2-6}
			& 4096 &  1.91e-3& 2.00 & 1.25e-3& 2.00\\
			\cline{2-6}
			& 16384 & 4.78e-4&  2.00 & 3.12e-4& 2.00 \\
			\cline{2-6}
			& 65536 & 1.23e-4& 2.00 & 7.82e-5&  2.00\\
			\hline
		\end{tabular}
	\end{center}
	\caption{Standard HDG Method for the 2D reaction diffusion equation}
	%\caption{Standard HDG Method for the 2D Allen -Cahn type equation}
\end{table}

\begin{table}%[!hbp]
	\begin{center}
		\begin{tabular}{|c|c|c|c|c|c|}
			\hline
			$k$ & Mesh &$\norm{\bm {q}-\bm{q}_{h}}_{\mathcal T_h}$& Order& $\norm{{u}-{u}_h}_{\mathcal T_h}$ &Order  \\
			\hline
			\multirow{5}{*}{1} &48 & 1.56e-1  & -  & 7.89e-2    &  -  \\
			\cline{2-6}
			& 384 & 4.60e-2& 1.77& 2.35e-2& 1.75\\
			\cline{2-6}
			& 3072 & 1.31e-2& 1.82 & 6.20e-3& 1.93\\
			\cline{2-6}
			&  24576 & 3.40e-3&  1.94 & 1.58e-4& 1.98 \\
			\cline{2-6}
			& 196608 & 8.24e-4& 2.05 & 3.90e-5&  1.99\\
			\hline
		\end{tabular}
	\end{center}
	\caption{\label{table:GHDG_3D_CI_eqn}Interpolatory HDG Method for the 3D reaction diffusion equation}
	%\caption{\label{table:GHDG_3D_CI_eqn}Interpolatory HDG Method for the 3D Allen -Cahn type equation}
\end{table}

%\end{example}

%\subsection{A PDE from Stochastic Optimal Control\cite{MR2597943}}
%\begin{example}[A PDE from stochastic optimal control \cite{MR2597943}]
%	The nonlinear term $F(\nabla u, u) = |\nabla u|^2$ and the source term $f$ is chosen such that the exact solution is $u = e^{-t}\sin(\pi x)\sin(\pi y)$.
%	
\begin{table}%[hbp]
	\begin{center}
		\begin{tabular}{|c|c|c|c|c|c|}
			\hline
			$k$ & Mesh &$\norm{\bm{q}-\bm{q}_{h}}_{\mathcal T_h}$& Order& $\norm{{u}-{u}_h}_{\mathcal T_h}$ &Order  \\
			\hline
			\multirow{5}{*}{0} &256 & 1.11e-2 & -  & 7.58e-3   &  -  \\
			\cline{2-6}
			& 1024 & 5.31e-3& 1.06 &  3.32e-3& 1.20\\
			\cline{2-6}
			& 4096 &  2.67e-3& 1.00 & 1.59e-3& 1.06\\
			\cline{2-6}
			& 16384 & 1.32e-3&  1.01 & 7.73e-04& 1.04 \\
			\cline{2-6}
			& 65536 &  6.60e-4& 1.00 & 3.83e-04&  1.02\\
			\hline
			\multirow{5}{*}{1} &256 & 1.64e-3& -  & 3.97e-4&  -  \\
			\cline{2-6}
			& 1024 & 3.42e-4& 2.26 & 8.47e-5& 2.22\\
			\cline{2-6}
			& 4096 &  8.57e-5& 2.00 & 2.11e-5& 2.00\\
			\cline{2-6}
			& 16384 & 2.14e-5&  2.00 &5.28e-6& 2.00 \\
			\cline{2-6}
			& 65536 & 5.36e-6& 2.00 & 1.32e-6&  2.00\\
			\hline
		\end{tabular}
	\end{center}
	\caption{Interpolatory HDG Method for a 2D PDE from stochastic optimal control}
\end{table}
%\end{example}

%\subsection{A Scalar Burger's Equation}
%\begin{example}[A scalar Burger's equation]
%	The nonlinear term $F(\nabla u, u) =[u,u]^T\cdot\nabla u$ in 2D  and $F(\nabla u, u) =[u,u,u]^T\cdot\nabla u$ in 3D . The source term $f$ is chosen such that the exact solution is $u = e^{-t}\sin(\pi x)\sin(\pi y)$.

\begin{table}%[hbp]
	\begin{center}
		\begin{tabular}{|c|c|c|c|c|c|}
			\hline
			Degree & Mesh &$\norm{\bm{q}-\bm{q}_{h}}_{\mathcal T_h}$& Order& $\norm{{u}-{u}_h}_{\mathcal T_h}$ &Order  \\
			\hline
			\multirow{5}{*}{0} &256 & 1.57e-1 & -  & 1.10e-1   &  -  \\
			\cline{2-6}
			& 1024 & 7.75e-2& 1.01 &  5.15e-2& 1.10\\
			\cline{2-6}
			& 4096 &  3.88e-2& 1.00 & 2.50e-02& 1.04\\
			\cline{2-6}
			& 16384 & 1.94e-2&  1.00 & 1.23e-02& 1.02 \\
			\cline{2-6}
			& 65536 & 9.69e-3& 1.00 & 6.11e-03&  1.01\\
			\hline
			\multirow{5}{*}{1} &256 & 3.21e-2& -  & 1.94e-2 &  -  \\
			\cline{2-6}
			& 1024 & 7.91e-3& 2.02 & 4.96e-3& 1.97\\
			\cline{2-6}
			& 4096 &  1.97e-3& 2.00 & 1.24e-3& 2.00\\
			\cline{2-6}
			& 16384 & 4.92e-4&  2.00 & 3.13e-4& 2.00 \\
			\cline{2-6}
			& 65536 & 1.23e-4& 2.00 & 7.81e-4&  2.00\\
			\hline
		\end{tabular}
	\end{center}
	\caption{\label{table:GHDG_2D_Burgers}Interpolatory HDG Method for 2D Burger's equation}
\end{table}

%\end{example}

%For all examples, the Interpolatory HDG method converges at the optimal rate.  Furthermore, the errors are similar to the standard HDG method.

\section{Conclusion}

We proposed the Interpolatory HDG method for approximating the solution of scalar parabolic semilinear PDEs.  The Interpolatory HDG method replaces the nonlinear term with an elementwise interpolation, which leads to a simple and efficient implementation.  Specifically, unlike the standard HDG method, the Interpolatory HDG method does not require numerical quadrature to form the global matrix at each time step and at each step in a Newton iteration.  We also proved optimal convergence rates for the flux $\bm q$ and the primary unknown $u$ assuming the nonlinearity is globally Lipschitz.

Numerical experiments in 2D and 3D demonstrated that the Interpolatory HDG method converged at the optimal rates, and gave similar errors to the standard HDG method.  However, for Interpolatory HDG we did not numerically observe superconvergence by post-processing.  This is one disadvantage of Interpolatory HDG compared to standard HDG.  However, due to the computational efficiency of the interpolatory approach, Interpolatory HDG using a higher order polynomial degree may be a competitive alternative to standard HDG.  Furthermore, it may be possible to obtain superconvergence for Interpolatory HDG using an alternative post-processing approach.  We leave these issues to be thoroughly explored elsewhere.%  This remains to be thoroughly tested.%  It may be possible to modify the algorithm to restore the superconvergence.  This is a subject worthy of further investigation.  Even if

Although we have only used simplicial elements and the spaces given by \eqref{spaces}, our analysis extends in a straightforward manner to the HDG and mixed methods (new and old) obtained in the theory of M-decompositions,
see \cite{CockburnFuSayasM17}. Thus, in 2D, polygonal elements of any shape can be used, see \cite{CockburnFuM2D17}, and in 3D, tetrahedral, prismatic, pyramidal or hexagonal elements, see \cite{CockburnFuM3D17}. Indeed, for these methods, an auxiliary projection $\Pi_h(\bm{q},u)$, see 
its general definition in \cite[Definition 3.1]{CockburnFuSayasM17} and its approximation properties in \cite[Proposition 3.4]{CockburnFuSayasM17}, with which the error analysis becomes identical to the one we have presented.

The implementation of Interpolatory HDG in Section \ref{sec:GHDG_implementation} easily extends to these other HDG and mixed methods only in certain situations.  We plan to further explore implementation and superconvergence issues for other Interpolatory HDG and mixed methods in the future.

The idea leading to the Interpolatory HDG method can be applied to many other types of nonlinear PDEs.  We plan to investigate the Interpolatory HDG method for complex nonlinear PDE systems in the future.

\section*{Acknowledgements}
J.\ Singler and Y.\ Zhang were supported in part by National Science Foundation grant DMS-1217122. J.\ Singler and Y.\ Zhang thank the IMA for funding research visits, during which
some of this work was completed. Y.\ Zhang thanks Zhu Wang for many valuable conversations.

\appendix

\section{Implementation details for General Nonlinearities}

%\subsection{Interpolatory details of $F(\nabla u, u)$.}
\label{sec:GHDG_general_implementation}

%\subsubsection{GHDG formulation}
\subsection{The Interpolatory HDG formulation}

The full Interpolatory HDG discretization is to find $(\bm q^n_h,u^n_h,\widehat u^n_h)\in \bm V_h\times W_h\times M_h$ such that
\begin{equation}\label{full_discretion_Group_im2}
\begin{split}
(\bm{q}^n_h,\bm{r})_{\mathcal{T}_h}-(u^n_h,\nabla\cdot \bm{r})_{\mathcal{T}_h}+\left\langle\widehat{u}^n_h,\bm r \cdot \bm n \right\rangle_{\partial{\mathcal{T}_h}} &= 0, \\
(\partial^+_tu^n_h,w)_{\mathcal T_h}+(\nabla\cdot\bm{q}^n_h, w)_{\mathcal{T}_h}+\langle\tau(u_h^n - \widehat u_h^n),w\rangle_{\partial{\mathcal{T}_h}} + ( \mathcal I_h F(-\bm q_h^n, u_h^n),w)_{\mathcal{T}_h}&= (f^n,w)_{\mathcal{T}_h},\\
\langle {\bm{q}}^n_h\cdot \bm{n} + \tau(u_h^n - \widehat u_h^n), \mu\rangle_{\partial{\mathcal{T}_h}\backslash\varepsilon^{\partial}_h} &=0,\\
u^0_h &=\Pi_W u_0,%,\quad \widehat u^0_h=P_Mu_0
\end{split}
\end{equation}
for all $(\bm r,w,\mu)\in \bm V_h\times W_h\times M_h$ and $n=1,2,\ldots,N$. Similar to Section \ref{sec:GHDG_implementation}, we have
\begin{align}
( \mathcal I_h F(-\bm q_h^n, u_h^n),w)_{\mathcal{T}_h} = A_1 \mathcal F(\bm \alpha^n, \bm\beta^{n},\bm\gamma^{n}),
\end{align}
where 
\begin{align}
\mathcal F(\bm\alpha^{n},\bm\beta^{n},\bm\gamma^{n}) = [F(\alpha_1^{n},\beta_1^{n}, \gamma_1^{n}),\ldots,F(\alpha_{N_1}^{n},\beta_{N_1}^{n}, \gamma_{N_1}^{n})]^T.
\end{align}
Then the system \eqref{full_discretion_Group_im2} can be rewritten as
\begin{align}\label{system_equation_group32}
\underbrace{\begin{bmatrix}
	A_1 & 0 &-A_2  & A_4 \\
	0 & A_1 &-A_3  & A_5 \\
	A_2^T& A_3^T&A_6 +{\Delta t}^{-1}A_1&-A_7\\
	A_4^T& A_5^T&A_7^T&-A_8
	\end{bmatrix}}_{M}
\underbrace{\left[ {\begin{array}{*{20}{c}}
		\bm\alpha^{n}\\
		\bm\beta^{n}\\
		\bm\gamma^{n}\\
		\bm\zeta^{n}
		\end{array}} \right]}_{\bm x_{n}}+
\underbrace{\left[ {\begin{array}{*{20}{c}}
		0\\
		0\\
		A_1 \mathcal F(\bm\alpha^{n},\bm\beta^{n},\bm\gamma^{n})\\
		0
		\end{array}} \right]}_{\mathscr F(\bm x_{n})}
=\underbrace{\left[ {\begin{array}{*{20}{c}}
		0\\
		0\\
		b_1^n+{\Delta t}^{-1}A_1\bm\gamma^{n-1} \\
		0
		\end{array}} \right]}_{\bm b_n},
\end{align}
i.e., $ M\bm x_n +  \mathscr F(\bm x_n) = \bm b_n $.

Newton's method proceeds as in Section \ref{sec:GHDG_implementation}, but the Jacobian matrix $G'(\bm x_n^{(m-1)})$ is now given by% 
\begin{align*}
G'(\bm x_n^{(m-1)}) = M+\mathscr F'(\bm x_n^{(m-1)}),  \quad  \mathscr F'(\bm x_n^{(m-1)}) = \begin{bmatrix}
0 & 0 &0  & 0 \\
0 & 0 &0  & 0 \\
A_{11}^{n,(m)}& A_{12}^{n,(m)}&A_{13}^{n,(m)}&0\\
0 & 0 &0  & 0 
\end{bmatrix},
\end{align*}
where for $ k = 1, 2, 3, $ we define
\begin{align*}
A_{1k}^{n,(m)} &= A_1\text{diag}(\mathcal F_k'(\bm{\alpha}^{n,(m-1)}),\bm{\beta}^{n,(m-1)}),\bm{\gamma}^{n,(m-1)}),\\
\mathcal F_k'(\bm\alpha^{n},\bm\beta^{n},\bm\gamma^{n}) &= [F_k'(\alpha_1^{n,(m-1)},\beta_1^{n,(m-1)}, \gamma_1^{n,(m-1)}),\cdots,F_k'(\alpha_{N_1}^{n,(m-1)},\beta_{N_1}^{n,(m-1)}, \gamma_{N_1}^{n,(m-1)})]^T.
\end{align*}
Therefore, the linear system that must be solved is now given by
\begin{align}\label{system_equation_group12}
\begin{bmatrix}
A_1 & 0 &-A_2  & A_4 \\
0 & A_1 &-A_3  & A_5 \\
A_2^T+	A_{11}^{n,(m)}& A_3^T+	A_{12}^{n,(m)}&A_6 +{\Delta t}^{-1}A_1+	A_{13}^{n,(m)}&-A_7\\
A_4^T& A_5^T&A_7^T&-A_8
\end{bmatrix}
\left[ {\begin{array}{*{20}{c}}
	\bm\alpha^{n,(m)}\\
	\bm\beta^{n,(m)}\\
	\bm\gamma^{n,(m)}\\
	\bm\zeta^{n,(m)}
	\end{array}} \right]
=\bm {\widetilde  b},
\end{align}
where 
\begin{align}
\bm {\widetilde  b} =  G'(\bm x_n^{(m-1)}) \bm x_n^{(m-1)} - G(\bm x_n^{(m-1)}).
\end{align}
\subsection{Local Solver}

The system  \eqref{system_equation_group12}  can be rewritten as
\begin{align}\label{system_equation22}
\begin{bmatrix}
B_1 & B_2&B_3\\
B_4 & B_5&-B_6\\
B_3^T&B_6^T&B_7\\
\end{bmatrix}
\left[ {\begin{array}{*{20}{c}}
	\bm{x}\\
	\bm{y}\\
	\bm{z}
	\end{array}} \right]
=\left[ {\begin{array}{*{20}{c}}
	b_1\\
	b_2\\
	b_3
	\end{array}} \right],
\end{align}
where $\bm{x}=[\bm{\alpha^{n,(m)}};\bm{\beta^{n,(m)}}]$, $\bm{y}=\bm{\gamma}^{n,(m)}$, $\bm{z}=\bm{\zeta}^{n,(m)}$, $ \bm {\widetilde  b} = [ b_1;b_2;b_3] $, and $\{B_i\}_{i=1}^7$ are the corresponding blocks of the coefficient matrix in \eqref{system_equation_group12}.  The system \eqref{system_equation22} is equivalent with following equations:
\begin{subequations}\label{algebra_full2}
	\begin{align}
	B_1 \bm x + B_2\bm y +B_3\bm z&= b_1,\label{algebre_full_a2}\\
	B_4 \bm x +B_5\bm y -B_6\bm z &= b_2,\label{algebre_full_b2}\\
	B_3^T\bm x+ B_6^T\bm y + B_7 \bm z&=b_3.\label{algebre_full_c2}
	\end{align}
\end{subequations}

Similar to before, the matrices $B_1$ and $B_5$ are block diagonal with small blocks and they can be easily inverted.  Use \eqref{algebre_full_a2} and \eqref{algebre_full_b2} to express $\bm x$ and $\bm y$ in terms of $\bm z$ as follows:
\begin{align}
\bm x &= B_1^{-1}B_2\left(B_4B_1^{-1}B_2+B_5\right)^{-1}\left((B_6+B_4B_1^{-1}B_3)\bm z+ b_2-B_4B_1^{-1}b_1\right) -B_1^{-1}B_3\bm z + B_1^{-1}b_1\nonumber\\
&=:\tilde B_1 \bm z +\tilde b_1,\label{local_eli_a2}\\
\bm y &=\left(B_4B_1^{-1}B_2+B_5\right)^{-1} \left((B_6+B_4B_1^{-1}B_3)\bm z + b_2-B_4B_1^{-1}b_1\right)\nonumber\\
&=:\tilde B_2 \bm{\gamma}^n +\tilde b_2\label{local_eli_b2},
\end{align}
where 
\begin{align*}
Q = B_4B_1^{-1}B_2+B_5 = B_4B_1^{-1}B_2 +A_6 +{\Delta t}^{-1}A_1+	A_{13}^{n,(m)}.
\end{align*}
As in Section \ref{sec:local_solver}, the matrix $Q$ is block diagonal with small blocks.  Since $A_1$ is positive definite, if $\Delta t$ is small enough then $ Q $ is easily inverted.  Then we insert $\bm x$ and $\bm y$ into \eqref{algebre_full_c} and obtain the final system only involving $\bm z$:
\begin{align}\label{local_eli_c2}
(B_3^T \tilde B_1 + B_5^T \tilde B_2 +  B_6) \bm z = b_3 -B_3^T\tilde b_1 -B_5^T \tilde b_2
\end{align}

\bibliographystyle{plain}
\bibliography{yangwen_ref_papers,more}

\end{document}